\newtheorem{theorem}{Theorem}[section]
\newtheorem{thm}[theorem]{Theorem}
\newtheorem{lemma}[theorem]{Lemma}
\newtheorem*{thmA}{Theorem~A}
\newtheorem*{thmB}{Theorem~B}
\newtheorem*{thmC}{Theorem ~C}
\newtheorem{lem}[theorem]{Lemma}
\theoremstyle{definition}
\newtheorem*{hypA}{Hypothesis A}
\theoremstyle{remark}
\DeclareMathOperator{\Real}{Re}
\DeclareMathOperator{\GL}{GL}
\DeclareMathOperator{\SL}{SL}
\DeclareMathOperator{\Mult}{M}
\newcommand{\Layer}{{\mathbf E}}
\newcommand{\RRR}{\mathcal{R}(a,b)}
\newcommand{\bbZ}{{\mathbb Z}}
\newcommand{\Syl}{{\mathrm {Syl}}}
\DeclareMathOperator{\Sp}{Sp}
\DeclareMathOperator{\Sym}{S}
\DeclareMathOperator{\PSL}{PSL}
\DeclareMathOperator{\PSU}{PSU}
\DeclareMathOperator{\PSp}{PSp}
\DeclareMathOperator{\PGL}{PGL}
\newcommand{\Fit}{\mathbf{F}}
\newcommand{\OO}{\mathbf{O}}
\newcommand{\Centralizer}{\mathbf{C}}
\newcommand{\Center}{\mathbf{Z}}
\newcommand{\Normalizer}{\mathbf{N}}
\numberwithin{equation}{section}
\newcommand{\Alt}{{\mathrm {A}}}
\newcommand{\Out}{{\mathrm {Out}}}
\begin{document}

\title[]{$2$-parts of real class sizes}

\author[H. P. Tong-Viet]{Hung P. Tong-Viet}

\address{Department of Mathematical Sciences, Binghamton University, Binghamton, NY 13902-6000, USA}
\email{tongviet@math.binghamton.edu}

\begin{abstract} We investigate the structure of finite groups whose non-central real  class sizes have the same $2$-part. In particular, we prove that such groups  are solvable and have $2$-length one. As a consequence, we show that  a finite group is solvable if it has two real class sizes. This confirms a conjecture due to G. Navarro, L. Sanus and  P. Tiep. 
\end{abstract}

\subjclass[2010]{Primary 20E45; Secondary 20D10}

\date{August 8, 2018}

\keywords{Real conjugacy classes; involutions; $2$-parts; quasi-simple groups}

\thanks{This material is based upon work supported by the National Science Foundation under Grant No. DMS-1440140 while the author was in residence at the Mathematical Sciences Research Institute in Berkeley, California, during the Spring 2018 semester.}

\maketitle

\section{Introduction}
Let $G$ be a finite group. An element $x$ of $G$ is \emph{real} if $x$ and $x^{-1}$ are conjugate in $G$. A conjugacy class $x^G$ of $G$ is real if $x^G$ contains a real element. If $x^G$ is a real class of $G$, then we call the size of $x^G$, denoted by $|x^G|$, a \emph{real class size} and a \emph{non-central real class size},  if moreover, $x$ is non-central in $G$, i.e., $x$ does not  lie in the center $\Center(G)$ of $G$.

A classical result due to Burnside (\cite[Corollary 23.4]{Dornhoff}) states that a finite group is of odd order if and only if the identity element is the only real element. This result has been generalized by Chillag and Mann \cite{CM}, where the authors showed that 
if a finite group $G$ has only one real class size or equivalently every real element lies in $\Center(G)$, then $G$ is isomorphic to a direct product of a $2$-group and a group of odd order. 

The main purpose of this paper is to prove the following.
\begin{thmA}
Let $G$ be a finite group. If $G$ has two real class sizes, then $G$ is solvable.
\end{thmA}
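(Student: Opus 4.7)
The plan is to deduce Theorem A as an almost immediate consequence of the main structural result announced in the abstract: that every finite group whose non-central real class sizes share a common $2$-part is already solvable (with $2$-length one). First I would unwind the hypothesis ``two real class sizes''. The identity is a real element whose class has size $1$, and more generally every real element of $\Center(G)$ yields a real class of size $1$. Hence one of the two real class sizes of $G$ must equal $1$, and the other must be a single integer $n\ge 2$ which is then the common size of every non-central real class of $G$. (If no real element outside $\Center(G)$ exists at all, then $G$ has only one real class size, and the Chillag--Mann theorem cited in the introduction already forces $G$ to be a direct product of a $2$-group and a group of odd order, hence solvable.)

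With this reduction in hand, the second step is immediate: since all non-central real class sizes equal the single integer $n$, they trivially share the same $2$-part $n_2$. The hypothesis of the main structural theorem is therefore satisfied, and I would invoke it to conclude that $G$ is solvable. The argument is thus a strict specialization of the constant-$2$-part setting, which is why the paper's attention is devoted to the more general framework rather than to Theorem A itself.

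The real work accordingly lies not in Theorem A but in the structural theorem on which it rests. I expect the main obstacle there to be showing that no finite non-abelian simple (or more generally quasi-simple) group $S$ admits a constant $2$-part across its non-central real class sizes; this is the step that will force every non-abelian composition factor of a minimal counterexample to be ruled out. Such a verification typically proceeds case-by-case through the classification of finite simple groups, by exhibiting in each non-abelian simple $S$ a pair of real classes whose sizes have distinct $2$-adic valuations (e.g.\ a class of involutions versus a class represented by a suitable real semisimple or unipotent element in the Lie type case). This quasi-simple analysis is the portion I anticipate will occupy the bulk of the paper, combined with a local analysis of a minimal counterexample (studying the action of a Sylow $2$-subgroup on the $2'$-core via Hall--Higman--type arguments) to bridge from the simple-group input to the desired conclusion that $G$ itself is solvable of $2$-length one.
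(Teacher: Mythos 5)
Your proposal is correct and follows exactly the route the paper takes: since $1$ is always a real class size (the identity is real), having two real class sizes forces every non-central real class to have the one other size $n$, hence a common $2$-part, and Theorem~B then gives solvability. Your anticipated outline of the proof of Theorem~B (reduction to quasi-simple composition factors via the classification, plus local analysis of a minimal counterexample) also matches the paper's actual strategy.
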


This confirms a conjecture due to G. Navarro, L. Sanus and P. Tiep. Theorem A is best possible in the sense that there are non-solvable groups with exactly three real class sizes.  In fact, the special linear group $\SL_2(q)$ of degree $2$ over a finite field of size $q$, where $q\ge 7$ is a prime power and is congruent to $-1$ modulo $4$, has three real class sizes, namely $1, q(q-1)$ and $q(q+1)$ (see \cite[Theorem 38.1]{Dornhoff}) but $\SL_2(q)$ is non-solvable.

The extremal condition as in Theorem A has been studied extensively in the literature for  conjugacy classes as well as character degrees of finite groups. It\^{o} \cite{Ito1} has shown that if  a finite group has only two class sizes, then it is nilpotent. The alternating group $\Alt_4$ has two real class sizes, which are $1$ and $3$, but it is not nilpotent. 
So, we cannot replace solvability by nilpotency in Theorem A. It\^{o} \cite{Ito2} also showed that if a finite group has three class sizes, then it is solvable. Clearly, this cannot happen for real class sizes  by our example in the previous paragraph.

Recall that a character $\chi$ of a finite group $G$ is \emph{real-valued} if $\chi$ takes real values or equivalently, $\chi$ coincides with its complex conjugate.   Notice that the reality of conjugacy classes of a group can be read off from the character table of the group. This follows from the fact that an element $x\in G$ is real if and only if $\chi(x)$ is real for all complex irreducible characters $\chi$ of $G$ (\cite[Lemma 23.2]{Dornhoff}).

For the corresponding results in real-valued characters, 
Iwasaki \cite{Iwasaki}
and Moret\'{o} and Navarro \cite{MN} have studied the structure of finite groups with two and three real-valued irreducible characters. They show that all these groups must be solvable and their Sylow $2$-subgroups have a very restricted structure. By Brauer's Lemma on character tables (see \cite[Theorem $23.3$]{Dornhoff}), the number of real conjugacy classes and the number of real-valued irreducible characters of a finite group are the same. Thus the aforementioned results also give the structure of finite groups with at most three real conjugacy classes.  For degrees of real-valued characters, Navarro, Sanus and Tiep \cite[Theorem B]{NST} proved that a finite group is solvable if it has at most  three real-valued character degrees.

As already noted in \cite{NST}, any possible  proof of Theorem A is complicated. Instead of giving a direct proof of Theorem A, we will prove a much stronger result which implies Theorem A. For an integer $n\ge 1$ and a prime $p$,  the $p$-part of $n$, denoted by $n_p$ is the largest power of $p$ dividing $n$.

\begin{thmB} 
Let $G$ be a finite group. Suppose that all non-central real class sizes of  $G$ have the same $2$-part.  Then $G$ is solvable.
\end{thmB}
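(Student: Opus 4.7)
The plan is to argue by contradiction via a minimal counterexample. Let $G$ be a non-solvable group of least order such that every non-central real class of $G$ has size with $2$-part equal to a fixed $2^a$. Since every involution is real, the hypothesis applies in particular to involutions. By the Chillag--Mann theorem recalled in the introduction, if every real element of $G$ were central then $G$ would be a direct product of a $2$-group and a group of odd order, hence solvable; so $G$ contains a non-central involution $t$, and in particular $|t^G|_2 = 2^a$.

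The next step is to simplify the normal structure. Set $N = \OO(G)$. A standard argument shows that the hypothesis descends to $G/N$: any non-central real class in $G/N$ admits a real lift in $G$ (via a Glauberman-style correspondence for real classes modulo odd-order kernels), and because $|N|$ is odd the $2$-parts of the corresponding class sizes are preserved. Minimality of $G$ then forces $G/N$ to be solvable, which combined with solvability of $N$ makes $G$ solvable unless $\OO(G)=1$. Analogous reductions modulo central subgroups allow one to assume $\Center(G)$ is a $2$-group. Since $G$ is non-solvable, the generalized Fitting subgroup $F^*(G) = \Fit(G)\layer(G)$ then satisfies $\layer(G)\neq 1$, so $G$ contains a component $L$ with $S := L/\Center(L)$ a non-abelian simple group.

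The core of the argument is a CFSG case analysis establishing the following statement: for every non-abelian simple group $S$ and every subgroup $A$ with $S \trianglelefteq A \le \Aut(S)$, there exist two $A$-real classes of $S \setminus \{1\}$ whose $A$-class sizes have different $2$-parts. Applied with $A = G/\Centralizer_G(L)$, this produces two real classes in $L$ whose $G$-class sizes differ in $2$-part, contradicting the hypothesis. The prototype computation is already visible in the introduction: for $\SL_2(q)$ with $q \equiv -1 \pmod 4$, the non-central real class sizes $q(q-1)$ and $q(q+1)$ have different $2$-parts because $(q-1)_2 \neq (q+1)_2$. For alternating groups one handles this by explicit cycle counting (e.g.\ $3$-cycles versus double transpositions); for sporadic groups by consulting the ATLAS; and for groups of Lie type in odd characteristic by centralizer formulas applied to real semisimple elements in maximal tori of different types (split versus non-split).

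The main obstacle I expect is the Lie-type case. One must handle characteristic $2$ separately, where involutions are unipotent and the split/non-split torus dichotomy is unavailable, so that one is forced to pair unipotent involution classes with suitable real elements of odd order; and the small-rank exceptions $\PSL_2(q)$, $\PSU_3(q)$, $\Sz(q)$, $\tw{2}\gtwo(q)$, where both the $\Aut(S)$-action and the center of a Schur cover can fuse or split classes in ways that require case-specific care. Packaging the CFSG input as the clean two-class statement above, rather than as a catalogue of centralizer orders, and then combining it with the minimal-counterexample reductions, is where I expect the real technical weight of the argument to lie.
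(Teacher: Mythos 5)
Your skeleton (minimal counterexample, kill the odd core, locate a component, CFSG analysis of that component) matches the paper's in outline, but there are two genuine problems. First, the reduction modulo $N=\OO_{2'}(G)$ does not work as you state it. For a non-central real class $\overline{z}^{\,\overline{G}}$ of $\overline{G}=G/N$ one can indeed produce a real lift $z\in G$, but $|\overline{z}^{\,\overline{G}}|$ only \emph{divides} $|z^G|$, so the $2$-part of the class size can strictly drop in the quotient; it is preserved for real $2$-elements (whose centralizers behave well modulo odd-order normal subgroups) but not, in general, for real elements of odd order. Consequently $G/N$ need not satisfy the hypothesis of Theorem~B, and minimality cannot be invoked to conclude that it is solvable. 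The paper circumvents exactly this by replacing the hypothesis with the asymmetric Hypothesis~A (equality $|x^G|_2=2^a$ only for real $2$-elements, and merely $|z^G|_2\le 2^a$ for arbitrary non-central real elements), which it shows \emph{does} pass to the section $\OO^{2'}(G)/\OO_{2'}(\OO^{2'}(G))$ (Lemmas \ref{lem:normal odd index}, \ref{lem:odd quotient consequences} and \ref{lem:hypA}); the whole argument then runs on Hypothesis~A rather than on induction.

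Second, your ``core claim'' --- that for every simple $S$ and every $S\le A\le \Aut(S)$ there are two $A$-real classes of $S$ whose $A$-class sizes have different $2$-parts --- is essentially the entire theorem and is left unproven. As stated it also lives in $S=L/\Center(L)$ rather than in $G$: transferring it back requires controlling the lift of classes through the $2$-group $\Center(L)$, where real classes of $S$ need not lift to real classes of $L$ and $2$-parts of involution classes can change (Lemma \ref{lem: involution centralizers in factor groups}), as well as the index contribution $|\overline{G}:\overline{L}|_2$ (Lemma \ref{lem:Structure of H}). More importantly, you miss the structural lemma that makes the CFSG step tractable: the hypothesis forces $\Centralizer_G(i)$ to be $2$-closed for every non-central involution $i$, and more generally forces $\Centralizer_G(\langle x,t\rangle)$ to be $2$-closed for a real $2$-element $x$ inverted by a $2$-element $t$ (Lemma \ref{lem: consequences}). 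Via Griess's theorem on groups with all involutions central and Suzuki's classification of groups whose involution centralizers are $2$-closed, this pins the component down to a short explicit list and ultimately to $\SL_2(q)$ with $q\ge 5$ odd (Theorem \ref{th: classification}); only that one family then needs the split/non-split torus computation you describe. Without this filter, the case analysis you propose over all simple groups in all characteristics is a much heavier undertaking than the proof requires, and the hard cases you flag (characteristic $2$, small rank, Schur covers) are precisely the ones the $2$-closedness filter removes or isolates.
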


In other words, if $|x^G|_2=2^a$ for all non-central real elements $x\in G$, where $a\ge 0$ is a fixed integer, then $G$ is solvable. 
In fact, we can say more about the structure of these groups. 

\begin{thmC}
Let $G$ be a finite group. Suppose that all non-central real class sizes of  $G$ have the same $2$-part. Then $G$ has $2$-length one.
\end{thmC}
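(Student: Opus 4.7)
By Theorem~B, $G$ is solvable, so only the bound $\ell_2(G)\le 1$ on the $2$-length remains. I would argue by induction on $|G|$, taking $G$ to be a counterexample of minimal order.

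\emph{Reduction to $O_{2'}(G)=1$.} For $N\trianglelefteq G$ of odd order, set $\bar G=G/N$. Because $|N|$ is coprime to $2$, the standard decomposition of an element into its $2$-part and $2'$-part together with a coprime-order lifting shows that any non-central real class of $\bar G$ lifts to a non-central real class of $G$ with the same $2$-part of its class size. Hence $\bar G$ inherits the hypothesis, and by minimality $\ell_2(\bar G)\le 1$; but then $\ell_2(G)\le 1$, a contradiction. So we may assume $O_{2'}(G)=1$, and solvability then forces $F(G)=O_2(G)=:T$ with $C_G(T)\le T$.

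\emph{Structural setup and target.} Let $P\in\Syl_2(G)$ with $T\le P$; the conclusion $\ell_2(G)\le 1$ is equivalent to $P=T$, so suppose $P>T$. Then $G/T$ is a nontrivial solvable group with $O_2(G/T)=1$, so $O_{2'}(G/T)\ne 1$. Let $K/T=O_{2'}(G/T)$; by Schur--Zassenhaus, $K=T\rtimes H$ for some nontrivial Hall $2'$-subgroup $H$ acting faithfully and coprimely on $T$. The contradiction will come from exhibiting two non-central real elements $x,y\in G$ with $|x^G|_2\ne|y^G|_2$. A natural candidate for $x$ is an involution in $\Omega_1(Z(P))\setminus Z(G)$: then $P\le C_G(x)$ gives $|x^G|_2=1$. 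For $y$, I would exploit the faithful coprime action of $H$ on $T$: choosing $1\ne h\in H$ and invoking the Fitting decomposition $T/\Phi(T)=C_{T/\Phi(T)}(h)\times[T/\Phi(T),h]$, one tries either to realize $h$ itself as real in $G$ by finding an involution of $T$ inverting it, or to produce a real non-central mixed-order element $y=th$ with $t\in T$ whose centralizer misses some Sylow $2$-subgroup; in either case $|y^G|_2>1$, violating the hypothesis.

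\emph{Main obstacle.} The principal difficulty lies in the last construction. One has to treat separately the degenerate case $\Omega_1(Z(P))\le Z(G)$, where the easy choice of $x$ is unavailable; there one presumably needs a transfer or fusion argument to either force a normal $2$-complement (contradicting $H\ne 1$ acting faithfully on $T$) or produce a non-central involution outside $Z(P)$. One must also guarantee the existence of a genuinely real element $y$ of odd or mixed order with $|y^G|_2>1$, which is not automatic: it requires a close analysis of $T/\Phi(T)$ as an $H$-module, use of the fact that an element of odd order is real in $G$ iff it is inverted by some element of $G$, and, in the mixed-order case, a careful match of the $2$-part and $2'$-part. This is precisely the point at which the detailed structural work underpinning the proof of Theorem~B must be reused and refined.
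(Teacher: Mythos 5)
Your overall frame---invoke Theorem~B for solvability, pass to $G/\OO_{2'}(G)$, and then show that $\OO_2$ is a full Sylow $2$-subgroup---is the same as the paper's, but both of the steps that carry the actual content have gaps. First, the reduction: you assert that every non-central real class of $\overline{G}=G/N$ ($N$ of odd order) lifts to one of $G$ \emph{with the same $2$-part}. Coprime lifting gives this only for real $2$-elements (and, more generally, for elements whose order is prime to $|N|$); for a real element $\overline{z}$ of order not coprime to $|N|$, the centralizer $\Centralizer_{\overline{G}}(\overline{z})$ can properly contain the image of $\Centralizer_G(z)$, so the $2$-part of the class size can strictly drop. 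This is exactly why the paper does not claim that $\overline{G}$ is again an $\RRR$-group: Lemma~\ref{lem:odd quotient consequences}(3) proves only $|\overline{z}^{\overline{G}}|_2\le 2^a$ for general real elements, with equality guaranteed only for $2$-elements, and this asymmetry is deliberately built into Hypothesis~A (conditions (3) versus (4)). Your induction, as stated, therefore does not get off the ground: the quotient may have non-central real classes with \emph{different} $2$-parts even though $G$ does not.

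Second, the contradiction step is a plan rather than a proof, and the cases you flag as ``main obstacles'' are precisely where the theorem lives. Your easy half is fine: if $P>\OO_2(G)$ then $P$ is not normal, so by Lemma~\ref{lem: no odd}(3) (Baer--Suzuki) $G$ has a nontrivial real element of odd order, necessarily non-central with class size of even order. But the other half---producing a non-central involution in $\Center(P)$---fails in the degenerate case $\Omega_1(\Center(P))\le \Center(G)$, which is genuinely nonempty ($\GL_2(3)$ has $\Omega_1(\Center(P))=\Center(G)$ and $2$-length two), and you offer no argument there. The paper's route avoids this entirely: from the odd-order real element $z$ it takes $V\in\Syl_2(\Centralizer_G(z))$, notes $|V|\ge 2^b$, shows via Lemma~\ref{lem: centralizers of 2-subgroups} (resting on the Isaacs--Navarro result behind Lemma~\ref{lem: central real 2-elements}) that $\Centralizer_G(V)$ has a normal $2$-complement, and then uses $\Fit^*(\Normalizer_G(V))=\OO_2(\Normalizer_G(V))$ (Aschbacher 31.16, available because $\Fit^*(G)=\OO_2(G)$) to force that complement to be trivial---contradicting $z\in\Centralizer_G(V)$. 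No involution in $\Center(P)\setminus\Center(G)$, no transfer or fusion argument, and no module-theoretic analysis of $\OO_2(G)$ is needed. To repair your write-up you would need both to weaken the inherited hypothesis to the asymmetric form the paper uses and to replace the sketched construction of $y$ by an actual argument covering the degenerate case.
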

Recall that a group $G$ is said to have $2$-length one if there exist normal subgroups $N\leq K\leq G$ such that $N$ and $G/K$ have odd order and $K/N$ is a $2$-group. Theorem C confirms a conjecture proposed in \cite{Tong}.

Several variations of Theorem B are simply not true. Indeed, if we weaken the hypothesis of Theorem B  by assuming  that $|x^G|_2$ is $1$ or $2^a$ for all real elements $x\in G,$ then $G$ need not be solvable. For example, let $G=\SL_2(2^f)$ with $f\ge 2.$ Then every element of $G$ is real and $|x^G|_2=1$ or $2^f$ for all elements $x\in G.$ We cannot restrict the hypothesis to only real elements of odd order  as $\SL_2(7)$ has only one  conjugacy class of non-central real elements of odd order.
 Also, Theorem B does not hold for odd primes, at least for primes $p$ with $p\equiv -1$ (mod $4$).  In fact, we can take $G=\PSL_2(27)$ if $p=3$ and $G=\SL_2(p)$ if $p\ge 7$. We can check that  all non-central real class sizes of $G$ have the same $p$-part. There are  also some examples for primes $p$ with $p\equiv 1$ (mod $4$), for example,  we can take $G=\PSL_3(3)$ if $p=13$. (We are unable to find any example with $p=5$.)

Theorems B and C  can be considered as (weak) real conjugacy classes versions (only for even prime) of the famous Thompson's theorem on character degrees stating that if a prime $p$ divides the degrees of all non-linear irreducible complex characters of a finite group $G$, then $G$ has a normal $p$-complement. The exact analog of Thompson's theorem does not hold for conjugacy classes. However, Casolo, Dolfi and Jabara \cite{CDJ} proved that for a fixed prime $p$, if all non-central class sizes of a finite group $G$ have the same $p$-part, then $G$ is solvable and has a normal $p$-complement. Some real-valued characters versions of Thompson's theorem were obtained in \cite{NST,NT}.

In order to describe our strategy, we need some terminology and results from abstract group theory which can be found in Chapter $31$ of \cite{Aschbacher}. 
For a finite group $X$, the \emph{layer} of $X$, denoted by $\Layer(X)$, is the subgroup of $X$ generated by all quasi-simple subnormal subgroups (or \emph{components}) of $X$. A  finite group  $L$ is said to be \emph{quasi-simple} if $L$ is perfect and $L/\Center(L)$ is a non-abelian simple group. The \emph{generalized Fitting subgroup} of $X$, denoted by $\Fit^*(X)$, is the central product of $\Layer(X)$ and the Fitting subgroup $\Fit(X)$, the largest nilpotent normal subgroup of $X$. Bender's theorem states that $\Centralizer_X(\Fit^*(X))\leq \Fit^*(X)$ (see, for example, \cite[31.13]{Aschbacher}).

Back to our problem, for a finite group $G$, we denote by $\Real(G)$ the set of all real elements of $G$. Let $G$ be a finite group with $|x^G|_2=2^a$ for all $x\in\Real(G)\setminus \Center(G)$, where $a\ge 0$ is a fixed integer. An important consequence of this hypothesis which is key to our proofs is that if $x\in \Real(G)\setminus\Center(G)$ is a $2$-element and $t\in G$ is a $2$-element inverting $x$, then $\Centralizer_G(\langle x,t\rangle)$ has a normal Sylow $2$-subgroup; in particular, if $i$ is a non-central involution of $G$, then $\Centralizer_G(i)$ has a normal Sylow $2$-subgroup (Lemma \ref{lem: consequences}).  

Let $K=\OO^{2'}(G)$. Then $K$ satisfies the same hypothesis as $G$ does (Lemma \ref{lem:normal odd index}). Let $H$ be the quotient group $K/\OO_{2'}(K)$. Then $H=\OO^{2'}(H)$ and $\OO_{2'}(H)=1$  which implies that $\Fit(H)=\OO_2(H)$ and $\Fit^*(H)=\OO_2(H)\Layer(H)$. We consider two cases according to whether $\Layer(H)$ is trivial or not. When $\Layer(H)=1$, we have $\Fit^*(H)=\OO_2(H)$. Using \cite[$31.16$]{Aschbacher},  $\Fit^*(\Normalizer_H(U))=\OO_2(\Normalizer_H(U))$ for all $2$-subgroups $U\leq H$.  Using this, we can show that $H$ is a $2$-group and Theorem B holds in this case (Theorem \ref{th: 2-constrained}). 
  When $\Layer(H)$ is nontrivial, it must be a quasi-simple group (Lemma \ref{lem: reduction}) and actually it is isomorphic to $\SL_2(q)$ with $q\ge 5$ an odd prime power (Theorem \ref{th: classification}). Using some ad hoc arguments, we finally show that this cannot occur proving Theorem B. For Theorem C, we know that $G$ is solvable by Theorem B. Now by Bender's theorem, we know that $\Fit^*(H)=\OO_2(H)$ and thus $H$ is a $2$-group  by Theorem \ref{th: 2-constrained}. It follows that $G$ has $2$-length one as wanted.

Finally, we would like to point out some connections to other classical results in the literature. If $G$ is a finite group which satisfies the hypothesis of Theorem B, then the centralizer of every non-central involution of $G$  has a normal Sylow $2$-subgroup.  Finite groups in which the centralizers of involutions are $2$-closed (a group is $2$-closed if it has a normal Sylow $2$-subgroup) have been studied by Suzuki in \cite{Suzuki}. These are the so-called (C)-groups. 
On the other hand, the groups in which all involutions are central were classified by R. Griess \cite{Griess}. 
In view of these results, it would be interesting to investigate the structure of finite groups in which in the centralizers of all non-central involutions are $2$-closed.

The paper is organized as follows. In Section \ref{sec:2}, we collect some properties of groups whose non-central real class sizes have the same $2$-part. We prove Theorem C in Section \ref{sec3} and Theorem B in Section \ref{sec4}. In the last section, we classify all finite quasi-simple groups that can appear as composition factors of the groups considered in Section \ref{sec4}.

\section{$2$-parts of non-central real class sizes}\label{sec:2}

In this section, we collect some important properties of real classes as well as draw some consequences on the structure of groups satisfying the hypothesis of Theorem B. Recall that $\Real(G)$ is the set of all real elements of a finite group $G$. We begin with some properties of real elements and real class sizes.


Fix a nontrivial real element $x\in G$ and set $\Centralizer^*_G(x)=\{g\in G\:|\:x^g\in \{x,x^{-1}\}\}.$ Then $\Centralizer^*_G(x)$ is a subgroup of $G$ containing $\Centralizer_G(x)$ as a normal subgroup. If $t\in G$ such that $x^t=x^{-1}$, then $x^{t^2}=x$ so $t^2\in\Centralizer_G(x)$. Assume that $x$ is not an involution. We see that $t\in\Centralizer_G^*(x)\setminus \Centralizer_G(x)$  and if $h\in \Centralizer_G^*(x)\setminus \Centralizer_G(x)$, then $x^h=x^{-1}=x^t$ so that $ht^{-1}\in\Centralizer_G(x)$ or equivalently $h\in\Centralizer_G(x)t$. Thus $\Centralizer_G(x)$ has index $2$ in $\Centralizer_G^*(x)$ and $\Centralizer_G^*(x)=\Centralizer_G(x)\langle t\rangle$. Hence $|x^G|$ is even whenever $x$ is a real element with $x^2\neq 1$. In particular, $|x^G|$ is even if $x$ is a nontrivial real element of odd order. 
Notice that if $x\in \Real(G)\cap\Center(G)$, then $x^2=1$.

\begin{lem}\label{lem: real elements}  Let $G$ be a finite group and let $N\unlhd G$.

\begin{enumerate}[$(1)$]
\item If $x\in G$ is real, then every power of $x$ is also real.

\item If $x^g=x^{-1}$ for some $x,g\in G$, then $x^t=x^{-1}$ for some $2$-element $t\in G$.
\item If $x\in \Real(G)$ and $|x^G|$ is odd, then $x^2=1.$
\item If $|G:N|$ is odd, then $\Real(G)=\Real(N)$.
\end{enumerate}
\end{lem}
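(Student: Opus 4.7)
The plan is to handle parts (1), (3), (2), (4) essentially independently, building (4) on (2) and on Burnside's theorem recalled in the introduction. For (1), if $g \in G$ satisfies $x^g = x^{-1}$, then for any integer $k$ one has $(x^k)^g = (x^g)^k = x^{-k} = (x^k)^{-1}$, so the same $g$ realizes every power of $x$ as a real element. For (3), I would simply invoke the paragraph preceding the lemma, which already establishes that whenever $x$ is real with $x^2 \neq 1$ one has $|\Centralizer_G^*(x) : \Centralizer_G(x)| = 2$ and hence $|x^G| = 2\,|G : \Centralizer_G^*(x)|$ is even; contrapositively, if $|x^G|$ is odd then $x^2 = 1$.

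For (2), I would use the standard trick of splitting $g$ into its commuting $2$-part and $2'$-part. Write $g = g_2\, g_{2'} = g_{2'}\, g_2$, where $g_2 \in \langle g\rangle$ is a $2$-element and $g_{2'} \in \langle g\rangle$ has odd order, and set $n = |g_{2'}|$. A straightforward induction on $m$ from $x^g = x^{-1}$ gives $x^{g^m} = x^{(-1)^m}$ for all $m \geq 0$; in particular, since $n$ is odd, $x^{g^n} = x^{-1}$. But $g^n = g_2^n\, g_{2'}^n = g_2^n$, which is a $2$-element. Thus $t := g_2^n$ works.

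For (4), I would prove two inclusions. The inclusion $\Real(N) \subseteq \Real(G)$ is immediate: an element of $N$ inverted by some element of $N$ is, a fortiori, inverted by an element of $G$. For the reverse inclusion I would first show $\Real(G) \subseteq N$: given $x \in \Real(G)$, its image $\bar{x} \in G/N$ is real; but $|G/N|$ is odd, so by Burnside's theorem $\bar{x} = 1$, forcing $x \in N$. Next, given $x \in N$ real in $G$, apply part (2) to produce a $2$-element $t \in G$ with $x^t = x^{-1}$. Since $|G:N|$ is odd, every Sylow $2$-subgroup of $G$ lies in $N$, so $t \in N$, and hence $x \in \Real(N)$.

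None of the four items is particularly deep; the only step worth isolating is the commuting $2$-part$/$$2'$-part decomposition in (2), and this is precisely what also lets the passage from ``inverted in $G$'' to ``inverted in $N$'' go through in (4), after one has trapped $x$ inside $N$ via Burnside.
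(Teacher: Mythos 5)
Your proposal is correct and follows essentially the same route as the paper: part (2) via the odd power of $g$ landing on the $2$-part (the paper takes $t=g^{m}$ with $m$ the odd part of $o(g)$, which is the same computation), and part (4) by trapping $x$ in $N$ using oddness of $|G/N|$ and then pulling the $2$-element $t$ into $N$ (the paper verifies $\overline{x}=1$ by computing $x^{-2}=t^{-1}(xtx^{-1})\in N$ directly rather than citing Burnside, but the content is identical).
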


\begin{proof} Let $x\in G$ be a real element. Then $x^g=x^{-1}$ for some $g\in G$. If $k$ is any integer, then $(x^k)^g=(x^g)^k=(x^{-1})^k=(x^{k})^{-1}$ so $x^k$ is real which proves (1). Write $o(g)=2^am$ with $(2,m)=1$ and let $t=g^m$. Then $t$ is a $2$-element and $x^t=x^{g^m}=x^g=x^{-1}$ as $g^2\in\Centralizer_G(x)$  and $m$ is odd. This proves (2). Part (3) follows from the discussion above.
For part (4), suppose that $N\unlhd G$ and $|G/N|$ is odd. Let $x\in\Real(G)$. Then there exists a $2$-element $t\in G$ inverting $x$ by (2). As $|G/N|$ is  odd and $t$ is a $2$-element, $t\in N$ and thus $x^{-2}=t^{-1}(xtx^{-1})\in N$. Again, as $|G/N|$ is odd, $x\in N$ and so $x\in\Real(N)$.
\end{proof}

The first two claims of the following lemma is well-known. The last claim follows from \cite[Proposition~$6.4$]{DNT} whose proof uses  Baer-Suzuki theorem.

\begin{lemma}\label{lem: no odd} Let $G$ be a finite group and let $N\unlhd G$. Then

\begin{itemize}
\item[$(1)$] If $x\in N$, then $|x^N|$ divides $|x^G|$.
\item[$(2)$] If $Nx\in G/N$, then $|(Nx)^{G/N}|$ divides $|x^G|$.
\item[$(3)$]  $G$ has no nontrivial real element of odd order if and only if $G$ has a normal Sylow $2$-subgroup.
\end{itemize}
\end{lemma}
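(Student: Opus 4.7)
The plan is to handle the three parts separately, with (1) and (2) by standard orbit-counting and (3) by combining Lemma \ref{lem: real elements}(4) with the cited result from \cite{DNT}.

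For (1), I will use $\Centralizer_N(x) = \Centralizer_G(x) \cap N$ to compute
\[ \frac{|x^G|}{|x^N|} \;=\; \frac{|G|\cdot|\Centralizer_N(x)|}{|N|\cdot|\Centralizer_G(x)|} \;=\; [G : N\,\Centralizer_G(x)], \]
which is a positive integer. For (2), I will consider the restriction of the natural projection $G \to G/N$ to the conjugacy class $x^G$; its image is exactly $(Nx)^{G/N}$. If $(Ny_1)^g = Ny_2$ for some $g \in G$, then conjugation by $g$ bijects $x^G \cap Ny_1$ onto $x^G \cap Ny_2$, so all fibers of this surjection share a common cardinality, whence $|(Nx)^{G/N}|$ divides $|x^G|$.

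The forward direction of (3) is immediate from Lemma \ref{lem: real elements}(4): if $S \unlhd G$ is a Sylow $2$-subgroup then $|G:S|$ is odd, so $\Real(G) = \Real(S)$, and every element of $S$ has $2$-power order, forcing any real element of odd order to be trivial.

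For the reverse direction, my first move is to upgrade the hypothesis to the stronger assertion that every real element of $G$ is a $2$-element. Indeed, given $x \in \Real(G)$ with commuting decomposition $x = x_2 x_{2'}$ into $2$-part and $2'$-part, the element $x_{2'}$ is a power of $x$ and therefore real by Lemma \ref{lem: real elements}(1); since it has odd order, the hypothesis forces $x_{2'} = 1$, so $x = x_2$. With this strengthened statement in hand, I will invoke \cite[Proposition~6.4]{DNT} to conclude that $G$ has a normal Sylow $2$-subgroup. That final invocation is the substantive step and the main obstacle, since its proof rests on the Baer--Suzuki theorem; everything preceding it is routine orbit and power-decomposition bookkeeping.
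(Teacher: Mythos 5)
Your proof is correct and follows exactly the route the paper intends: the paper offers no written proof, merely noting that (1) and (2) are well-known and that (3) follows from \cite[Proposition~6.4]{DNT}, and your index computation, equal-fiber argument, and the reduction of ``no nontrivial real element of odd order'' to ``every real element is a $2$-element'' via Lemma \ref{lem: real elements}(1) before citing that proposition are all sound fillings-in of those standard steps.
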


We next study the structure of finite groups  in which all real $2$-elements are central. 
The proof of the following result  is similar to that of  \cite[Corollary C]{IN2}.

\begin{lem}\label{lem: central real 2-elements}
Let $G$ be a finite group. Assume that all real $2$-elements of $G$ lie in $\Center(G)$. Then $G$ has a normal $2$-complement.
\end{lem}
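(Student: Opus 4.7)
The plan is to argue by contradiction: let $G$ be a minimal counterexample, so $G$ satisfies the hypothesis but has no normal $2$-complement. The very first observation is that every involution $t\in G$ is trivially real (as $t=t^{-1}$), so the hypothesis forces every involution of $G$ to lie in $\Center(G)$; in particular $\Omega_1(P)\leq\Center(G)$ for any Sylow $2$-subgroup $P$ of $G$. Notice also that Lemma~\ref{lem: real elements}(1) gives that every power of a real element is real, so in fact the $2$-part of any real element of $G$ is forced to lie in $\Center(G)$.

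Next I would apply Frobenius's normal $p$-complement theorem: since $G$ has no normal $2$-complement, there exists a non-trivial $2$-subgroup $Q\leq G$ and an element $x\in\Normalizer_G(Q)$ of odd prime order $p$ that does not centralize $Q$. Choosing such a pair with $|Q|$ as small as possible and, by coprime action, replacing $Q$ with $[Q,x]$ (which remains $x$-invariant and is still not centralized by $x$, for otherwise $[Q,x,x]=1$ would force $[Q,x]=1$), I may assume $Q=[Q,x]\neq 1$. The goal is then to exhibit a non-central real $2$-element of $G$ inside $Q$, which will contradict the hypothesis.

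If $x$ does not centralize $\Omega_1(Q)$, then some involution $t\in Q$ satisfies $t^x\neq t$; since $t$ is real and is not commuted with by $x\in G$, we get $t\notin\Center(G)$, contradicting the opening observation. The harder subcase is when $x$ centralizes $\Omega_1(Q)$. Here I would pass to the Frattini quotient $\bar Q=Q/\Phi(Q)$: since $[Q,x]=Q$ and the action is coprime, Maschke gives $\bar Q=C_{\bar Q}(x)\oplus[\bar Q,x]$, so $[\bar Q,x]=\bar Q$ forces $x$ to act fixed-point-freely on the elementary abelian $2$-group $\bar Q$. Any lift $q\in Q\setminus\Phi(Q)$ is then a $2$-element not centralized by $x$ and satisfying $q^{-1}\in q\,\Phi(Q)$, and the plan is to promote this mod-Frattini inversion to a genuine $G$-conjugacy $q\sim q^{-1}$ using the minimality of $|Q|$ together with the action of $\Normalizer_G(Q)$ on $Q$.

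The main obstacle is exactly this promotion step in the harder subcase: in concrete examples such as $Q_8\rtimes C_3\cong\SL_2(3)$ the required inversion is already visible inside $Q$ itself (for instance $jij^{-1}=i^{-1}$), but in general producing an element of $G$ that actually inverts the lift $q$---starting only from the mod-$\Phi(Q)$ congruence---requires a careful structural analysis of $Q$ as an $\langle x\rangle$-module. This is presumably handled by a $Z^{\ast}$-theorem or transfer argument parallel to the proof of \cite[Corollary~C]{IN2} cited by the author, which is precisely the technical input that converts the ``hypothesis pushes real $2$-elements to the center'' into an honest normal $2$-complement.
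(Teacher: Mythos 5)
Your opening reductions are sound and broadly parallel to the paper's: involutions are real, hence central; Frobenius's normal $p$-complement theorem reduces everything to showing that a $2'$-element $x$ normalizing a $2$-subgroup $Q$ and centralizing all real $2$-elements of $G$ must centralize $Q$; and the case where $x$ moves some involution of $Q$ is handled correctly. But the ``harder subcase'' is where the entire content of the lemma lives, and your argument there does not close. The congruence $q^{-1}\in q\,\Phi(Q)$ is vacuous: in a $2$-group one always has $Q^2\leq \Phi(Q)$, so \emph{every} element is ``inverted mod $\Phi(Q)$,'' and this gives no leverage whatsoever toward producing an element of $G$ that genuinely inverts $q$. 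A priori $x$ could act nontrivially on $Q$ while centralizing every element of $Q$ that happens to be real (in $Q$, or even in $G$), and ruling this out is precisely the nontrivial theorem you would need. Your own text concedes the point by deferring to an unspecified ``$Z^{\ast}$-theorem or transfer argument,'' so the proof is not complete.

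The paper's proof is much shorter because it invokes exactly the missing ingredient as a black box: \cite[Theorem B]{IN2} states that if a $2'$-group acts on a $2$-group $P$ and fixes every real element of $P$, then it acts trivially on $P$. The paper observes that for any $2$-subgroup $P\leq G$ one has $\Real(P)\subseteq\Real(G)\cap\{2\text{-elements}\}\subseteq\Center(G)$, so any $2'$-subgroup of $\Normalizer_G(P)$ centralizes $\Real(P)$ and hence $P$; thus $\Normalizer_G(P)/\Centralizer_G(P)$ is a $2$-group for every $2$-subgroup $P$, and Frobenius finishes. If you want to keep your structure, the fix is simply to cite \cite[Theorem B]{IN2} at the point where you need to show $x$ centralizes $Q$; reproving that result from scratch via the Frattini quotient is not feasible along the lines you sketch.
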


\begin{proof} 
Let $P$ be a $2$-subgroup of $G$. Then $\Real(P)\subseteq \Center(G)$ by the hypothesis of the lemma.   Let $Q\leq \Normalizer_G(P)$ be a $2'$-group. Since $\Real(P)\subseteq \Center(G)$, $Q$ centralizes all real elements of $P$,  \cite[Theorem B]{IN2} now implies that $Q$ centralizes $P$. Hence $Q\leq \Centralizer_G(P)$. It follows that $\Normalizer_G(P)/\Centralizer_G(P)$ is a $2$-group. As $P$ is chosen arbitrarily,  Frobenius normal $p$-complement theorem (\cite[39.4]{Aschbacher}) implies that $G$ has a normal $2$-complement.
\end{proof}


Let $G$ be a finite group with $|G|_2=2^{a+b}$, where $a,b\ge 0$ are fixed integers. Observe that
the two conditions $|x^G|_2=2^a$ for all  elements $x\in \Real(G)\setminus\Center(G)$  and $|\Centralizer_G(x)|_2=2^b$ for all $x\in \Real(G)\setminus\Center(G)$ are equivalent.
For brevity, we say that a finite group $G$ is an \emph{$\RRR$-group} if $G$ satisfies one of the two equivalent conditions above. 

\begin{lem}\label{lem: consequences} Let $G$ be an $\RRR$-group.  Let $x\in \Real(G)\setminus\Center(G)$ and  let $t\in G$ be a $2$-element such that $x^t=x^{-1}$. Then
\begin{itemize}
\item[$(a)$] If  $o(x)=2m$ for some integer $m>1$, then $x^m\in\Center(G)$.
\item[$(b)$] If $x$ is a $2$-element, then  $\Centralizer_G(\langle x,t\rangle)$ is $2$-closed. Moreover, if $x$ is an involution of $G$, then $\Centralizer_G(x)$ is $2$-closed.
\end{itemize}

\end{lem}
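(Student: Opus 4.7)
The plan is to exploit, in each part, the $\RRR$ hypothesis that every non-central real class of $G$ has $2$-part exactly $2^a$ (equivalently, centralizer $2$-part exactly $2^b$), by producing a second real non-central element whose centralizer gives a numerical contradiction.

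For part (a), I will use the auxiliary subgroup $\Centralizer_G^*(x)=\Centralizer_G(x)\langle t\rangle$ introduced in the paragraph before Lemma~\ref{lem: real elements}. Since $o(x)=2m>2$, $x$ is not an involution, so $x\neq x^{-1}$ and the cited discussion gives $[\Centralizer_G^*(x):\Centralizer_G(x)]=2$. In particular $|\Centralizer_G^*(x)|_2=2\cdot 2^b=2^{b+1}$. Assume for contradiction that $x^m\notin\Center(G)$. The element $x^m$ is an involution (as $o(x)=2m$) and is real by Lemma~\ref{lem: real elements}(i), so the $\RRR$ hypothesis gives $|\Centralizer_G(x^m)|_2=2^b$. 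But every $g\in\Centralizer_G^*(x)$ satisfies $(x^m)^g=x^{\pm m}=x^m$, since $x^m$ is its own inverse; hence $\Centralizer_G^*(x)\leq\Centralizer_G(x^m)$, forcing $2^{b+1}\mid 2^b$, which is absurd.

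For part (b), observe first that the ``moreover'' clause follows from the main assertion by choosing $t=1$: when $x$ is an involution, the identity is a $2$-element and satisfies $x^1=x=x^{-1}$, with $\Centralizer_G(\langle x,1\rangle)=\Centralizer_G(x)$. So I may focus on showing $Q:=\Centralizer_G(\langle x,t\rangle)=\Centralizer_G(x)\cap\Centralizer_G(t)$ is $2$-closed. Suppose, for contradiction, that $Q$ is not $2$-closed; then by Lemma~\ref{lem: no odd}(3) there exists a nontrivial real element $z\in Q$ of odd order, and some $c\in Q$ with $z^c=z^{-1}$. Here $c$ commutes with both $x$ and $t$, and $z$ commutes with both $x$ and $t$.

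The key move is to consider $u:=xz$. Since $x$ is a $2$-element, $z$ has odd order, and $x,z$ commute, $u$ has $2$-part $x$ and $2'$-part $z$, so $\Centralizer_G(u)=\Centralizer_G(x)\cap\Centralizer_G(z)$. A direct computation using the commutation relations gives $u^{ct}=(xz^{-1})^t=x^{-1}z^{-1}=u^{-1}$, so $u$ is real in $G$; and $u\notin\Center(G)$, because otherwise $x$, being a power of $u$, would lie in $\Center(G)$, contradicting the hypothesis on $x$. The $\RRR$ hypothesis then gives $|\Centralizer_G(u)|_2=2^b=|\Centralizer_G(x)|_2$, so the index $[\Centralizer_G(x):\Centralizer_{\Centralizer_G(x)}(z)]=|z^{\Centralizer_G(x)}|$ is odd. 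Since $z$ is also real in $\Centralizer_G(x)$ (via $c\in Q\leq\Centralizer_G(x)$), Lemma~\ref{lem: real elements}(iii) applied inside $\Centralizer_G(x)$ forces $z^2=1$; combined with $o(z)$ odd this yields $z=1$, the desired contradiction.

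The main obstacle, in each case, is identifying the right auxiliary element: in (a) one needs the extension $\Centralizer_G^*(x)$ exploiting the index-$2$ inversion phenomenon, and in (b) the crucial step is forming $u=xz$ and verifying both its reality (which requires combining the inverter $c$ of $z$ inside $Q$ with the $2$-element $t$ inverting $x$) and its non-centrality (via the coprime-order decomposition). Once these elements are in hand, the numerical contradictions are immediate from the $\RRR$ hypothesis together with Lemma~\ref{lem: real elements}(iii).
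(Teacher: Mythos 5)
Your proposal is correct and follows essentially the same route as the paper: part (a) via the index-$2$ subgroup $\Centralizer_G^*(x)\leq\Centralizer_G(x^m)$ forcing the $2$-part of $\Centralizer_G(x^m)$ to exceed $2^b$, and part (b) by producing the real non-central element $u=xz$ (the paper's $xy$ inverted by $st$), comparing $|\Centralizer_G(u)|_2$ with $|\Centralizer_G(x)|_2$ to make $|z^{\Centralizer_G(x)}|$ odd, and invoking Lemma~\ref{lem: real elements}(3). The only cosmetic difference is that you phrase both parts as contradictions where the paper argues (a) directly; the substance is identical.
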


\begin{proof}  
For part (a), suppose that $o(x)=2m$  with $m>1$. 
Notice that $x^m$ is an involution, $t\in\Centralizer_G^*(x)\setminus\Centralizer_G(x)$ and $|\Centralizer_G(x)|_2=2^b$.  We have $(x^m)^t=(x^{m})^{-1}=x^m$, so $t\in\Centralizer_G(x^m)$ and hence $\Centralizer_G(x)\leq \Centralizer_G^*(x)\leq \Centralizer_G(x^m)$. Since $x^2\neq 1,$ we have $|\Centralizer_G^*(x)|_2=2|\Centralizer_G(x)|_2=2^{b+1}$. Therefore $|\Centralizer_G(x^m)|_2\ge |\Centralizer_G^*(x)|_2>2^b$. Since $G$ is a $\RRR$-group, this forces $x^m\in\Center(G)$.

For part (b), assume  that $x$ is a real $2$-element inverted by $t$. Let $J:=\Centralizer_G(\langle x,t\rangle)$. We first claim that $J$ has no nontrivial real element of odd order. By contradiction, suppose that there exist $y,s\in J$ with $y^s=y^{-1}$, where $o(y)>1$ is odd. Observe that $[x,y]=[x,s]=[t,y]=[t,s]=1$ and $x^t=x^{-1},y^s=y^{-1}$ so $$(xy)^{st}=x^{st}y^{st}=x^ty^{ts}=x^ty^s=x^{-1}y^{-1}=y^{-1}x^{-1}=(xy)^{-1}.$$ So $xy\in\Real(G)\setminus\Center(G)$. 
Since $(o(x),o(y))=1$,  $\Centralizer_G(xy)=\Centralizer_G(x)\cap\Centralizer_G(y)=\Centralizer_A(y)$, where $A=\Centralizer_G(x)\ge J.$ Let $U$ be a Sylow $2$-subgroup of $\Centralizer_G(xy)$. Then $U$ is also a Sylow $2$-subgroup of both $\Centralizer_G(x)$  and $\Centralizer_G(y)$ as  $|\Centralizer_G(xy)|_2=|\Centralizer_G(x)|_2=|\Centralizer_G(y)|_2=2^b$ (noting $x,y,xy\in\Real(G)\setminus\Center(G)$). Thus $\Centralizer_A(y)$ contains a Sylow $2$-subgroup of $A$, therefore $|y^A|$ is odd and hence $y^2=1$ by Lemma \ref{lem: real elements}(3). (Note that $y\in\Real(J)\subseteq\Real(A)$.) However, as $o(y)$ is odd, $y=1$, which is a contradiction.
Thus $J$ has no nontrivial real element of odd order and so by Lemma \ref{lem: no odd}(3) $J$ has a normal Sylow $2$-subgroup as required. Finally, if $x$ is an involution, then we can choose $t=1$ and the result follows.
\end{proof}

In the next lemma, we show that every normal subgroup of odd index of an $\RRR$-group is also an $\RRR$-group.
\begin{lem}\label{lem:normal odd index}
Suppose that $G$ is an $\RRR$-group. Let $K$ be a normal subgroup of $G$ of odd index. Then $K$ is also an $\RRR$-group.
\end{lem}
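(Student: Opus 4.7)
The plan is to transfer the $\RRR$-property from $G$ down to $K$ by matching $2$-parts of centralizers, using that the index $[G:K]$ is odd. Fix the integer $b\ge 0$ such that $|\Centralizer_G(x)|_2 = 2^b$ for every $x\in\Real(G)\setminus\Center(G)$. I will show that the same $b$ works for $K$.

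First, I would apply Lemma \ref{lem: real elements}(4): since $[G:K]$ is odd, one has $\Real(G)=\Real(K)$. Next, the easy containment $\Center(G)\cap K\subseteq \Center(K)$ gives, by contrapositive, that any $x\in K\setminus\Center(K)$ lies outside $\Center(G)$. Combining these two observations, an arbitrary $x\in\Real(K)\setminus\Center(K)$ is automatically an element of $\Real(G)\setminus\Center(G)$, so by the $\RRR$-hypothesis on $G$ we have $|\Centralizer_G(x)|_2=2^b$.

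To conclude, I compare $\Centralizer_K(x)=\Centralizer_G(x)\cap K$ with $\Centralizer_G(x)$: the index $[\Centralizer_G(x):\Centralizer_K(x)]$ divides $[G:K]$ and is therefore odd. Consequently $|\Centralizer_K(x)|_2=|\Centralizer_G(x)|_2=2^b$. Since $|K|_2=|G|_2$ (again because $[G:K]$ is odd), the class size $|x^K|$ has the same $2$-part $|K|_2/2^b=2^a$ for every non-central real element $x\in K$, which is precisely the defining condition of an $\RRR$-group.

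There is no serious obstacle here; the only thing to watch is the logical order of the two reductions (realness is preserved in both directions by Lemma \ref{lem: real elements}(4), while centrality is only preserved in the direction $\Center(G)\cap K\subseteq \Center(K)$, which is exactly what one needs to ensure that non-central real elements of $K$ are handled by the $G$-hypothesis). The case where $\Real(K)\setminus\Center(K)$ is empty is trivial: $K$ vacuously satisfies the $\RRR$-condition.
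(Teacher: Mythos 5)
Your proof is correct and follows essentially the same route as the paper: reduce to showing that a non-central real element of $K$ is a non-central real element of $G$, and then match the $2$-parts of $\Centralizer_K(x)$ and $\Centralizer_G(x)$ using that $|G:K|$ is odd (the paper does this by tracking a Sylow $2$-subgroup of $\Centralizer_G(x)$ into $K$, you by noting $|\Centralizer_G(x):\Centralizer_K(x)|$ divides $|G:K|$ via the second isomorphism theorem — an equivalent step). Your explicit remark that $\Center(G)\cap K\subseteq\Center(K)$ is the containment needed to apply the hypothesis on $G$ is a point the paper leaves implicit, and it is handled correctly.
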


\begin{proof}
Let $K$ be a normal subgroup of $G$ of odd index. By Lemma \ref{lem: real elements}(4), we have $\Real(G)=\Real(K)$. Let $x\in \Real(K)\setminus\Center(K)$ and let $C:=\Centralizer_G(x)$. Let $P\in\Syl_2(C)$ and let $S\in\Syl_2(G)$ such that $P\leq S$. Since $|G:K|$ is odd, we have $P\leq S\leq K$. In particular, $S\in\Syl_2(K)$. We have  $P\leq K\cap C=\Centralizer_K(x)\leq C$. Thus $P$ is also a Sylow $2$-subgroup of $\Centralizer_K(x)$. Therefore $|C|_2=|\Centralizer_K(x)|_2$ and hence $K$ is an $\RRR$-group.
\end{proof}

The first part of the following lemma is  essentially Lemma 2.2 in \cite{GNT} and its proof. For the reader's convenience, we include the proof here.

\begin{lem}\label{lem:centralizers of 2-groups}
Let $G$ be a finite group and let $N\unlhd G$ be a normal subgroup of odd order. Let $xN\in G/N$ be a  real element. Write $\overline{G}=G/N$.

\begin{itemize}
\item[$(1)$] There exist $y\in G$  and a $2$-element $t\in G$ such that  $\overline{x}=\overline{y}$  and $y^t=y^{-1}$.
\item[$(2)$]  If $\overline{x}$ is a $2$-element, then $y$ can be chosen to be a $2$-element. Moreover, if $\overline{x}$ is an involution in $\overline{G}$, then $y$ can be chosen to be an involution in $G$.

\item[$(3)$] In Claim $(2)$,  we have $|\overline{x}^{\overline{G}}|_2=|y^G|_2$ and $\Centralizer_{\overline{G}}(\langle \overline{x},\overline{t}\rangle)=\overline{\Centralizer_G(\langle y,t\rangle)}$.

\end{itemize}
\end{lem}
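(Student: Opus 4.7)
The plan is to lift the inversion relation from $\overline{G} := G/N$ to $G$, exploiting that $|N|$ is odd. For (1), first apply Lemma~\ref{lem: real elements}(2) inside $\overline{G}$ to obtain a $2$-element $\overline{\tau} \in \overline{G}$ with $\overline{x}^{\overline{\tau}} = \overline{x}^{-1}$. A direct computation from $\overline{\tau}\,\overline{x}\,\overline{\tau}^{-1} = \overline{x}^{-1}$ yields $(\overline{x}\,\overline{\tau})^2 = \overline{\tau}^2$, so $\overline{s} := \overline{x}\,\overline{\tau}$ is itself a $2$-element. Because $|N|$ is odd, both $\overline{\tau}$ and $\overline{s}$ admit $2$-element lifts $\tau, s \in G$ (take the $2$-part of any preimage). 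The key auxiliary fact, proved via Schur--Zassenhaus, is that any two $2$-elements of $G$ lying in a common coset of $N$ are $N$-conjugate: inside $N\langle g\rangle$ they generate $2$-group complements to $N$, which are $N$-conjugate, and this upgrades to element conjugacy because a $2$-element and its image in $\overline{G}$ share the same order. Apply this to $s^2, \tau^2 \in \tau^2 N$ and replace $s$ by an $N$-conjugate so that $s^2 = \tau^2$. Then $y := s\tau^{-1}$ and $t := \tau$ satisfy $\overline{y} = \overline{x}$, and the desired identity $y^t = y^{-1}$ is equivalent to $s^2 = \tau^2$. For (2), assume $\overline{x}$ is a $2$-element; then $\overline{H} := \langle \overline{x}, \overline{\tau}\rangle$ is a $2$-group (normal cyclic $\langle \overline{x}\rangle$ of $2$-power order with $2$-power cyclic quotient), and Schur--Zassenhaus applied to its full preimage in $G$ produces a $2$-group complement $P \le G$ with $P \xrightarrow{\sim} \overline{H}$. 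The unique lifts $y, t \in P$ of $\overline{x}, \overline{\tau}$ are then $2$-elements, $y^t = y^{-1}$ is inherited from $\overline{H}$, and $y$ is an involution whenever $\overline{x}$ is.

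For (3), set $A := \langle y, t\rangle$, a $2$-group with $A \cap N = 1$. The central intermediate claim is that for every $2$-element $a \in G$, $\Centralizer_{\overline{G}}(\overline{a}) = \overline{\Centralizer_G(a)}$: if $\overline{g}$ centralizes $\overline{a}$, then $a^g$ and $a$ are $2$-elements in the coset $aN$, hence $N$-conjugate by the auxiliary fact from (1), so $a^g = a^n$ for some $n \in N$ and $gn^{-1} \in \Centralizer_G(a)$. Applied to $a = y$, this yields $|\Centralizer_{\overline{G}}(\overline{y})|_2 = |\Centralizer_G(y)|_2$ (the kernel $\Centralizer_G(y) \cap N$ has odd order), whence $|\overline{x}^{\overline{G}}|_2 = |y^G|_2$. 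For the joint centralizer, let $\overline{g}$ centralize $\overline{A} = AN/N$. Then $g$ normalizes $AN = N \rtimes A$, and Schur--Zassenhaus inside $AN$ produces $n \in N$ with $gAg^{-1} = nAn^{-1}$. The element $g' := n^{-1}g$ thus lies in $\Normalizer_G(A)$, and since $\overline{g'}$ fixes every $\overline{a}$ while $A \cap aN = \{a\}$ for each $a \in A$, $g'$ actually centralizes $A$. Consequently $g \in \Centralizer_G(\langle y, t\rangle)\cdot N$, giving the nontrivial inclusion in $\Centralizer_{\overline{G}}(\langle \overline{x}, \overline{t}\rangle) = \overline{\Centralizer_G(\langle y, t\rangle)}$.

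The main obstacle is really the single recurring lemma, that two $2$-elements of $G$ in a common coset of an odd-order normal subgroup are $N$-conjugate; it requires Schur--Zassenhaus together with a short elementwise argument exploiting $o(g) = o(\overline{g})$ for $2$-elements $g$ when $|N|$ is odd. A secondary subtlety is that the product $y = s\tau^{-1}$ constructed for (1) need not itself be a $2$-element, so (2) cannot be deduced from (1); the proof of (2) must restart by lifting the full $2$-group $\langle \overline{x}, \overline{\tau}\rangle$ at once, which is why Schur--Zassenhaus is applied in a different way there.
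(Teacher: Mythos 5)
Your proof is correct, but it follows a genuinely different route from the paper's. For part (1) the paper uses a short fixed-point argument: the permutation $z\mapsto (z^t)^{-1}$ of $G$ has $2$-power order (being the product of two commuting permutations of $2$-power order), it stabilizes the coset $xN$, and since $|xN|=|N|$ is odd it fixes some $y\in xN$; a fixed point is exactly an element with $y^t=y^{-1}$. You instead lift $\overline{\tau}$ and $\overline{x}\,\overline{\tau}$ to $2$-elements, use the Schur--Zassenhaus conjugacy theorem to arrange $s^2=\tau^2$, and set $y=s\tau^{-1}$; your auxiliary lemma (two $2$-elements of $G$ in the same coset of $N$ are $N$-conjugate) and the equivalence $y^t=y^{-1}\iff s^2=\tau^2$ both check out. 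For part (2) the paper simply replaces the $y$ from part (1) by its $2$-part $y^{um}$ (B\'ezout), which is still inverted by $t$ since inversion relations pass to powers; so your remark that ``(2) cannot be deduced from (1)'' is not accurate --- it can, and that is what the paper does --- but your alternative of applying Schur--Zassenhaus to the preimage of the $2$-group $\langle\overline{x},\overline{\tau}\rangle$ is also valid. For part (3) the paper just cites the standard coprime-action centralizer lemma \cite[Lemma 7.7]{Isaacs-1}, which you essentially re-prove from scratch via your auxiliary fact; your argument for the joint centralizer (conjugating the complement $A^g$ back to $A$ by an element of $N$ and using $A\cap aN=\{a\}$) is a correct, self-contained substitute. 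The trade-off: your approach is more self-contained but leans repeatedly on the conjugacy part of Schur--Zassenhaus --- harmless here, since in each application the relevant quotient is cyclic or a $2$-group, hence solvable, so no appeal to Feit--Thompson is needed --- whereas the paper's argument is shorter and more elementary.
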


\begin{proof} Let $xN$ be a real element of $G/N$. Then there exists a $2$-element $tN\in G/N$ inverting $xN$ by Lemma \ref{lem: real elements}(2). By considering the $2$-part of $t$, we can assume that $t$ is a $2$-element. The maps $z\mapsto z^{-1}$ and $z\mapsto z^t$ are commuting permutations of $G$  having 2-power order and thus their product $\sigma$ has $2$-power order.  Each of these two permutations maps $xN$ to $x^{-1}N$ and $x^{-1}N$ to $xN$. So $\sigma$ defines a permutation on $xN$. Since $\sigma$ has $2$-power order and $|xN|=|N|$ is odd, $\sigma$ has a fixed point $y\in xN$. Thus $y=y^{\sigma}=(y^t)^{-1}$. So $y^t=y^{-1}$ as wanted. This proves $(1)$.

 Assume next that the order of $xN$ in $G/N$ is $2^k$ for some integer $k\ge 0.$ By part (1), there exist elements $y,t\in G$  such that $xN=yN$ and $y^t=y^{-1}$. Since $|N|$ is odd,  $o(y)=2^km$, where $m$ is odd and $y^{2^k}\in N$. Since $(2^k,m)=1$, there exist integers $u,v$ such that $1=um+v2^k$ and thus $yN=(y^{um}N)(y^{v2^k}N)=y^{um}N$. Clearly, $y^{um}$ is a $2$-element and is inverted by $t$. Replace $y$ by $y^{um}$, the first claim of part (2) follows. 
 Moreover, as $|N|$ is odd,  if $y^2\in N$ and $y$ is a $2$-element, then $y^2=1$. Hence the last claim of $(2)$ follows.
 
 Finally, for part (3), by  \cite[Lemma 7.7]{Isaacs-1}, $\Centralizer_{\overline{G}}(\overline{x})=\Centralizer_{\overline{G}}(\overline{y})=\overline{\Centralizer_G(y)}$  since $(o(y),|N|)=1$. If $U$ is a Sylow $2$-subgroup of $\Centralizer_G(y)$, then $\overline{U}$ is a Sylow $2$-subgroup of $\overline{\Centralizer_G(y)}$ and $|U|=|\overline{U}|$ so $|\overline{y}^{\overline{G}}|_2=|y^G|_2$. Finally, we have  $\Centralizer_{\overline{G}}(\langle \overline{x},\overline{t}\rangle)=\Centralizer_{\overline{G}}(\langle \overline{y},\overline{t}\rangle)=\overline{\Centralizer_G(\langle y,t\rangle)}$, where the second equality follows from  \cite[Lemma 7.7]{Isaacs-1} again as $\langle y,t
\rangle$ is a $2$-group. 
\end{proof}

 In the next lemma, we determine some properties of the quotient group $G/\OO_{2'}(G)$.

\begin{lem}\label{lem:odd quotient consequences}
Let $G$ be an $\RRR$-group and let $T$ be a subgroup of $G$ containing $\OO_{2'}(G)$. Let $\overline{G}=G/\OO_{2'}(G)$. Then the following hold.

\begin{itemize}
\item[$(1)$] If $\overline{x}\in \Real(\overline{T})\setminus\Center(\overline{T})$ is a $2$-element, then there exists  a $2$-element $\overline{t}\in \overline{T}$ inverting $\overline{x}$ and $\Centralizer_{\overline{G}}(\langle\overline{x},\overline{t}\rangle)$ has a normal Sylow $2$-subgroup. 

\item[(2)] If $\overline{x}\in \Real(\overline{T})\setminus\Center(\overline{T})$  is an involution, then $\Centralizer_{\overline{G}}(\overline{x})$ is $2$-closed.

\item[$(3)$] If  $\overline{z},\overline{x}\in\Real(\overline{G})\setminus \Center(\overline{G})$ and $\overline{x}$ is a $2$-element, then $|\overline{z}^{\overline{G}}|_2\leq |\overline{x}^{\overline{G}}|_2=2^a$ and $|\Centralizer_{\overline{G}}(\overline{x})|_2=2^b$.   
 \end{itemize}
\end{lem}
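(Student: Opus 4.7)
The plan is to reduce each claim to the corresponding fact about $G$ itself by lifting real elements from the quotient $\overline{G} = G/N$, where $N := \OO_{2'}(G)$ is a normal subgroup of odd order in both $G$ and $T$. The two tools doing the work are Lemma \ref{lem:centralizers of 2-groups} (producing a nice preimage in $G$ of a real element in $\overline{G}$) and Lemma \ref{lem: consequences}, which is exactly the ``centralizer is $2$-closed'' statement for real $2$-elements in an $\RRR$-group.

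For (1) and (2), I would start with $\overline{x} \in \Real(\overline{T}) \setminus \Center(\overline{T})$ a $2$-element (respectively an involution). First apply Lemma \ref{lem:centralizers of 2-groups}(2) inside $T$, using $N \leq T$ of odd order, to produce a $2$-element $y \in T$ with $\overline{y} = \overline{x}$ (an involution if $\overline{x}$ is), together with a $2$-element $t \in T$ with $y^t = y^{-1}$. The element $y$ cannot lie in $\Center(G)$: if it did, then $y \in \Center(G) \cap T \leq \Center(T)$, and its image $\overline{x}$ would lie in $\Center(\overline{T})$, a contradiction. Hence $y \in \Real(G) \setminus \Center(G)$, and Lemma \ref{lem: consequences}(b) gives that $\Centralizer_G(\langle y,t\rangle)$ is $2$-closed. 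Finally, since $\langle y,t\rangle$ is a $2$-group and $|N|$ is odd, Lemma \ref{lem:centralizers of 2-groups}(3) yields
\[
\Centralizer_{\overline{G}}(\langle \overline{x}, \overline{t}\rangle) = \overline{\Centralizer_G(\langle y,t\rangle)},
\]
and the image of a $2$-closed group under a quotient map is again $2$-closed, which proves (1). Taking $\overline{t} = 1$ in the involution case gives (2).

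For (3), the case of a real $2$-element $\overline{x}$ is handled by the same lifting (now with $T = G$): it produces a $2$-element $y \in \Real(G) \setminus \Center(G)$ with $\overline{y} = \overline{x}$, so by the $\RRR$-hypothesis $|y^G|_2 = 2^a$ and $|\Centralizer_G(y)|_2 = 2^b$. Lemma \ref{lem:centralizers of 2-groups}(3) then transports these equalities to $\overline{G}$, giving $|\overline{x}^{\overline{G}}|_2 = 2^a$ and, using $|\overline{G}|_2 = |G|_2$, $|\Centralizer_{\overline{G}}(\overline{x})|_2 = 2^b$. For a general $\overline{z} \in \Real(\overline{G}) \setminus \Center(\overline{G})$, Lemma \ref{lem:centralizers of 2-groups}(1) provides a preimage $z' \in \Real(G) \setminus \Center(G)$, so $|\Centralizer_G(z')|_2 = 2^b$; since $|N|$ is odd, a Sylow $2$-subgroup of $\Centralizer_G(z')$ injects into $\Centralizer_{\overline{G}}(\overline{z})$, hence $|\Centralizer_{\overline{G}}(\overline{z})|_2 \geq 2^b$, which rearranges to $|\overline{z}^{\overline{G}}|_2 \leq 2^a$.

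The only real subtlety lies in part (3): Lemma \ref{lem:centralizers of 2-groups}(3) identifies centralizers mod $N$ only when the lifted element has order coprime to $|N|$, which we can arrange for the $2$-element $\overline{x}$ but not for an arbitrary $\overline{z}$. This is why the statement gives only the inequality $|\overline{z}^{\overline{G}}|_2 \leq 2^a$ in general, obtained via a Sylow embedding rather than an equality of centralizers. Everything else is routine bookkeeping with the bar convention, together with the observations that $\Center(G) \cap T \leq \Center(T)$ and that being $2$-closed is preserved under quotients by normal subgroups.
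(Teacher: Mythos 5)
Your proof is correct and follows essentially the same route as the paper: lift $\overline{x}$ to a real $2$-element $y\in T$ via Lemma \ref{lem:centralizers of 2-groups}, apply Lemma \ref{lem: consequences}(b) in $G$, and transport the conclusion back to $\overline{G}$ using Lemma \ref{lem:centralizers of 2-groups}(3). The only cosmetic difference is in part (3), where the paper derives $|\overline{z}^{\overline{G}}|_2\leq 2^a$ from the divisibility $|\overline{w}^{\overline{G}}|\mid|w^G|$ while you embed a Sylow $2$-subgroup of $\Centralizer_G(z')$ into $\Centralizer_{\overline{G}}(\overline{z})$; both are valid one-line arguments.
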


\begin{proof}
For part (1), let $\overline{x}\in\Real(\overline{T})\setminus\Center(\overline{T})$ be a $2$-element. By applying Lemma \ref{lem:centralizers of 2-groups} for $T$, there exist $2$-elements $y,t\in T$ such that $\overline{x}=\overline{y}$ and $y^{t}=y^{-1}$. Since $\overline{y}=\overline{x}\not\in\Center(\overline{T})$, $y\not\in \Center(T)$ and so  $y\in\Real(G)\setminus\Center(G)$.
   By Lemma \ref{lem: consequences}(b), $\Centralizer_G(\langle y,t\rangle)$ is $2$-closed and thus by  Lemma 
\ref{lem:centralizers of 2-groups}(3), $\Centralizer_{\overline{G}}(\langle\overline{x},\overline{t}\rangle)=\Centralizer_{\overline{G}}(\langle \overline{y},\overline{t}\rangle)=\overline{\Centralizer_G(\langle y,t\rangle)}$ is $2$-closed.  

Part (2) follows from Lemma \ref{lem:centralizers of 2-groups}(2) and part (1) above. 
For part (3), let $\overline{z},\overline{x}\in \Real(\overline{G})\setminus\Center(\overline{G})$, where $\overline{x}$ is a $2$-element. By Lemma \ref{lem:centralizers of 2-groups}(1), there exists  $w\in \Real(G)\setminus\Center(G)$ with $\overline{w}=\overline{z}$. We have that $|\overline{w}^{\overline{G}}|_2\leq |w^G|_2=2^a$ as $|\overline{w}^{\overline{G}}|$ divides $|w^G|$.
By Lemma \ref{lem:centralizers of 2-groups}(3), $|\overline{x}^{\overline{G}}|_2=|y^G|_2=2^a$, which implies that $|\Centralizer_{\overline{G}}(\overline{x})|_2=2^b$. The proof is now complete.
\end{proof}

\section{Proof of Theorem C}\label{sec3}
 In this section, we will prove Theorem C assuming the solvability from Theorem B. Indeed, Theorem C will follow immediately from Theorem \ref{th: 2-constrained}. It is convenient to state the following.

\begin{hypA}\label{hyp:A}  Let $G$ be a finite group with $\OO_{2'}(G)=1$ and $G=\OO^{2'}(G)$. Let $a,b\ge 0$ be fixed integers. Assume the following hold.

\begin{itemize}
\item[(1)] If $T\leq G$ and $x\in \Real(T)\setminus\Center(T)$ is a $2$-element, then there exists a $2$-element $t\in T$ inverting $x$ and $\Centralizer_G(\langle x,t\rangle)$  is $2$-closed.
\item[(2)]  If $x\in G\setminus\Center(G)$ is an involution, then $\Centralizer_G(x)$ is $2$-closed.

\item[$(3)$] If $x\in \Real(G)\setminus\Center(G)$  is a $2$-element, then $|x^G|_2=2^a$ and $|\Centralizer_G(x)|_2=2^b$.

\item[$(4)$] If $z\in \Real(G)\setminus\Center(G)$, then $|z^G|_2\le 2^a$ and $|\Centralizer_G(z)|_2\ge 2^b$.
\end{itemize}
\end{hypA}

Using Lemmas \ref{lem:normal odd index} and \ref{lem:odd quotient consequences},  we show that if a finite group $G$ satisfies the hypothesis of Theorem B, then certain section of $G$ satisfies the hypothesis above.
\begin{lem}\label{lem:hypA} If $G$ is an $\RRR$-group,  $K=\OO^{2'}(G)$ and $H=K/\OO_{2'}(K)$, then $H$ satisfies Hypothesis~\href{hyp:A}{A}.
\end{lem}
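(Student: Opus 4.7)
The plan is to chain together Lemmas~\ref{lem:normal odd index} and~\ref{lem:odd quotient consequences}, which are tailored precisely for this reduction. First, since $|G:K| = |G:\OO^{2'}(G)|$ is odd, Lemma~\ref{lem:normal odd index} shows that $K$ is itself an $\RRR$-group with the same integers $a, b$ as $G$. Standard properties of $\OO^{2'}$ give $K = \OO^{2'}(K)$, and passing to $H = K/\OO_{2'}(K)$ one immediately obtains $H = \OO^{2'}(H)$ and $\OO_{2'}(H) = 1$, which is the unnumbered part of Hypothesis~A. Moreover, since both $|G:K|$ and $|\OO_{2'}(K)|$ are odd, one has $|H|_2 = |K|_2 = |G|_2 = 2^{a+b}$, a fact that will feed the Sylow count in clause~(4) below.

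To verify clauses~(1) and~(2) of Hypothesis~A, I would apply Lemma~\ref{lem:odd quotient consequences} with $K$ in the role of $G$, so that $\overline{G} = H$. Given any subgroup $T \le H$, take its full preimage $T^* \le K$; then $T^*$ automatically contains $\OO_{2'}(K)$ and satisfies $T^*/\OO_{2'}(K) = T$, so Lemma~\ref{lem:odd quotient consequences}(1) applied to $T^*$ is exactly clause~(1) of Hypothesis~A, noting that ``has a normal Sylow $2$-subgroup'' is the definition of $2$-closed used in the introduction. Clause~(2) is then the special case $T = H$, equivalently Lemma~\ref{lem:odd quotient consequences}(2).

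For clause~(3), if $\overline{x} \in \Real(H) \setminus \Center(H)$ is a $2$-element, then Lemma~\ref{lem:odd quotient consequences}(3) yields $|\overline{x}^H|_2 = 2^a$ and $|\Centralizer_H(\overline{x})|_2 = 2^b$ directly. For clause~(4), an arbitrary $\overline{z} \in \Real(H) \setminus \Center(H)$ satisfies $|\overline{z}^H|_2 \le 2^a$ by the first inequality of Lemma~\ref{lem:odd quotient consequences}(3), and the complementary bound $|\Centralizer_H(\overline{z})|_2 \ge 2^b$ then follows from $|H|_2 = 2^{a+b}$. I expect no substantive obstacle: the argument is essentially formal bookkeeping, and the one mildly delicate point will be to keep straight that the centralizers in clause~(1) of Hypothesis~A are taken in the ambient group $H$ rather than inside $T$, which is already the convention adopted in Lemma~\ref{lem:odd quotient consequences}(1).
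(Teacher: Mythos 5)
Your proposal is correct and follows essentially the same route as the paper: apply Lemma~\ref{lem:normal odd index} to see that $K$ is an $\RRR$-group, then apply Lemma~\ref{lem:odd quotient consequences} with $K$ in the role of $G$ (so $\overline{G}=H$) and match its conclusions to the clauses of Hypothesis~A. The paper states this more tersely, leaving the clause-by-clause bookkeeping (including the derivation of clause~(4) from $|H|_2=2^{a+b}$) implicit, but there is no substantive difference.
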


\begin{proof}
Let $G$ be an $\RRR$-group and let $K=\OO^{2'}(G)$. We know from Lemma \ref{lem:normal odd index} that $K$ is also an $\RRR$-group. Let $H=K/\OO_{2'}(K)$. Then $\OO^{2'}(H)=H$, $\OO_{2'}(H)=1$ and $H$ satisfies the conclusion of Lemma \ref{lem:odd quotient consequences}. Therefore, $H$ satisfies Hypothesis~\href{hyp:A}{A}.
\end{proof}

The following is an easy consequence of Lemma \ref{lem: central real 2-elements}.

\begin{lem}\label{lem: centralizers of 2-subgroups} Let $G$ be a finite group. Assume that  $|\Centralizer_G(u)|_2=2^b$ for all $2$-elements $u\in \Real(G)\setminus\Center(G)$. If $U\leq G$ is a $2$-subgroup of $G$ with $|U|\ge 2^b$, then $\Centralizer_G(U)$ has a normal $2$-complement.
\end{lem}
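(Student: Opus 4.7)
The plan is to invoke Lemma~\ref{lem: central real 2-elements} applied to the subgroup $C := \Centralizer_G(U)$. For this, I need to verify that every real $2$-element of $C$ lies in $\Center(C)$. The useful auxiliary observation is that $U$ is automatically central in $C$, since by definition every element of $\Centralizer_G(U)$ commutes with $U$; hence any element of $U$ is automatically in $\Center(C)$. So it suffices to show that every real $2$-element of $C$ lying outside $\Center(G)$ actually belongs to $U$ itself.

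So let $u \in C$ be a real $2$-element. If $u \in \Center(G)$, then $u \in \Center(C)$ and there is nothing to do. Otherwise $u \in \Real(G) \setminus \Center(G)$ is a $2$-element, and the hypothesis of the lemma gives $|\Centralizer_G(u)|_2 = 2^b$. Because $u$ centralizes $U$, we have $U \leq \Centralizer_G(u)$, which forces $|U| \leq 2^b$; combined with the standing assumption $|U| \geq 2^b$, this pins down $|U| = 2^b$ and identifies $U$ as a full Sylow $2$-subgroup of $\Centralizer_G(u)$. Since $u$ is itself a $2$-element of $\Centralizer_G(u)$ commuting with every element of $U$, the group $\langle u, U\rangle$ is a $2$-subgroup of $\Centralizer_G(u)$ containing the Sylow $2$-subgroup $U$, and hence equals $U$. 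Therefore $u \in U \leq \Center(C)$.

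Having verified that every real $2$-element of $C$ lies in $\Center(C)$, Lemma~\ref{lem: central real 2-elements} applies directly to $C$ and produces the desired normal $2$-complement. The only nontrivial step is the order comparison $|U| \leq |\Centralizer_G(u)|_2 = 2^b$, which combined with $|U| \geq 2^b$ promotes the relation ``$u$ centralizes $U$'' into the far stronger ``$u \in U$''; the rest is formal, so I do not anticipate any real obstacle.
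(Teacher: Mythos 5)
Your proof is correct and follows essentially the same route as the paper's: you show that $U$ must be a Sylow $2$-subgroup of $\Centralizer_G(u)$, deduce $u\in U$ and hence $u\in\Center(C)$, and then invoke Lemma \ref{lem: central real 2-elements}. The only (harmless) imprecision is the blanket claim that every element of $U$ lies in $\Center(C)$ --- this fails when $U$ is non-abelian, since such elements need not lie in $C=\Centralizer_G(U)$ at all --- but it does not affect your argument, because the element $u$ you apply it to already lies in $C$, and every element of $U\cap C$ is centralized by all of $C$.
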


\begin{proof}
Let  $U\leq G$ be a $2$-group with $|U|\ge 2^b$. Let $C=\Centralizer_G(U)$. Observe that if all $2$-elements in $\Real(C)$ lie in $\Center(C)$, then $C$ has a normal $2$-complement by Lemma \ref{lem: central real 2-elements}. Thus by contradiction, assume that there exists a real $2$-element $y\in \Real(C)\setminus\Center(C)$. Clearly $y\in\Real(G)\setminus\Center(G)$ and so $|\Centralizer_G(y)|_2=2^b$. We see that $U\leq \Centralizer_G(y)$ as $y\in\Centralizer_G(U)$, so $\langle U,y\rangle$ is a $2$-subgroup of $\Centralizer_G(y)$.   Since $|\Centralizer_G(y)|_2=2^b$ and $|U|\ge 2^b$, $U\in\Syl_2(\Centralizer_G(y))$ which implies that $y\in U$. But this implies that $y\in\Center(C)$ since $[C,U]=1$ and $y\in U$,  a contradiction.
\end{proof}

We now prove the key result of this section.
\begin{thm}\label{th: 2-constrained} Let $G$ be a finite group satisfying Hypothesis~\href{hyp:A}{A}. If $\Fit^*(G)=\OO_2(G)$, then $G$ is a $2$-group.
\end{thm}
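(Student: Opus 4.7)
The plan is to argue by contradiction: suppose $G$ satisfies Hypothesis~A with $\Fit^*(G)=\OO_2(G)$ but $G$ is not a $2$-group. The strategy is to manufacture a nontrivial normal $2'$-subgroup inside some $2$-local subgroup and then kill it via Bender's theorem applied locally, using that the $2$-constrained condition $\Fit^*=\OO_2$ is inherited by normalizers of $2$-subgroups.

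First I would locate a nontrivial real element $z$ of odd order. Since $\OO_{2'}(G)=1$ and $G=\OO^{2'}(G)$, a normal Sylow $2$-subgroup of $G$ would force $G$ to equal it and thus to be a $2$-group; by the contrapositive of Lemma~\ref{lem: no odd}(3) such an element $z$ therefore exists. As any central real element is an involution, $z\notin\Center(G)$, so Hypothesis~A(4) gives $|\Centralizer_G(z)|_2\geq 2^b$. Let $V\in\Syl_2(\Centralizer_G(z))$, so $|V|\geq 2^b$ and $z\in\Centralizer_G(V)$. Hypothesis~A(3) is exactly what is needed to apply Lemma~\ref{lem: centralizers of 2-subgroups}, which yields that $\Centralizer_G(V)$ has a normal $2$-complement $K=\OO_{2'}(\Centralizer_G(V))$, and $z\in K$ since $z$ has odd order. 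Setting $M:=\Normalizer_G(V)$, the subgroup $K$ is characteristic in $\Centralizer_G(V)\trianglelefteq M$, hence normal in $M$, so $K\leq \OO_{2'}(M)$.

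The final step invokes Aschbacher~\cite[31.16]{Aschbacher}: from $\Fit^*(G)=\OO_2(G)$ and $V$ a $2$-subgroup we get $\Fit^*(M)=\OO_2(M)$, and Bender's theorem then gives $\Centralizer_M(\OO_2(M))\leq \OO_2(M)$. Since $\OO_{2'}(M)$ and $\OO_2(M)$ are both normal in $M$ with coprime orders, we have $[\OO_{2'}(M),\OO_2(M)]\leq \OO_{2'}(M)\cap\OO_2(M)=1$, whence $\OO_{2'}(M)\leq \Centralizer_M(\OO_2(M))\leq \OO_2(M)$, which forces $\OO_{2'}(M)=1$. This contradicts $1\neq z\in K\leq \OO_{2'}(M)$ and finishes the proof. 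The main obstacle, as anticipated in the introduction, is really this last step: everything depends on Aschbacher~\cite[31.16]{Aschbacher} to propagate the $2$-constrained condition from $G$ to $\Normalizer_G(V)$, so that Bender's theorem can operate inside $M$ and crush the normal $2'$-complement of $\Centralizer_G(V)$ produced by Hypothesis~A(3)--(4).
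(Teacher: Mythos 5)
Your proposal is correct and follows essentially the same route as the paper: find a noncentral real element $z$ of odd order via Lemma~\ref{lem: no odd}(3), take $V\in\Syl_2(\Centralizer_G(z))$, apply Lemma~\ref{lem: centralizers of 2-subgroups} to get a normal $2$-complement of $\Centralizer_G(V)$ sitting inside $\OO_{2'}(\Normalizer_G(V))$, and kill it with \cite[31.16]{Aschbacher} plus Bender's theorem. The only (harmless) difference is that you spell out explicitly why $\Fit^*(M)=\OO_2(M)$ forces $\OO_{2'}(M)=1$, a step the paper leaves to the reader.
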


\begin{proof} Suppose that $G$ satisfies Hypothesis~\href{hyp:A}{A} and  $\Fit^*\-(G)=\OO_2(G)$ but $G$ is not a $2$-group. Notice that  $G=\OO^{2'}(G)$. If  $G$ has no non-trivial real element of odd order, then $G$ has normal Sylow $2$-subgroup by Lemma \ref{lem: no odd}(3), but then since $G=\OO^{2'}(G)$, it must be a $2$-group. Therefore, we may assume that there exists an element $z\in\Real(G)\setminus\Center(G)$ of odd order.

Let  $V\in\Syl_2(\Centralizer_G(z))$. By Hypothesis~\href{hyp:A}{A}(4),  $|V|\ge 2^b$. Clearly, by  Hypothesis~\href{hyp:A}{A}(3), $G$ satisfies the hypothesis of Lemma \ref{lem: centralizers of 2-subgroups} and so $\Centralizer_G(V)$ has a normal $2$-complement, say $W$. Then $W=\OO_{2'}(\Centralizer_G(V))$ and $\Centralizer_G(V)/W$ is a $2$-group. Now, we see that $W\unlhd \Centralizer_G(V)\unlhd \Normalizer_G(V)$ and $W$ is characteristic in $\Centralizer_G(V)$ so $W\leq \OO_{2'}(\Normalizer_G(V))$. However, by \cite[$31.16$]{Aschbacher}, as $\Fit^*(G)=\OO_2(G)$, we have $\Fit^*(\Normalizer_G(V))=\OO_2(\Normalizer_G(V))$, which forces $\OO_{2'}(\Normalizer_G(V))=1$ (by Bender's theorem \cite[$31.13$]{Aschbacher}),  hence $W=1$ so $\Centralizer_G(V)$ is a $2$-group, which is impossible since $z\in\Centralizer_G(V)$  is a nontrivial element of odd order. This contradiction shows that $G$ is a $2$-group.
\end{proof}

Assuming the solvability from Theorem B, we can now prove Theorem C, which is included in the following.

\begin{thm}\label{th:Theorem C}
Let $G$ be an $\RRR$-group. Then $\OO^{2'}(G)$ has a normal $2$-complement $N$. So $G$ has $2$-length one. Moreover, the $2$-group $\OO^{2'}(G)/N$ has  at most two real class sizes.
\end{thm}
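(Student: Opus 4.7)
The plan is a direct assembly of the technology developed in this section together with the still-to-be-proved solvability from Theorem B. First I set $K := \OO^{2'}(G)$, so that $G/K$ has odd order, and $N := \OO_{2'}(K)$, which is characteristic in $K$ and hence normal in $G$. By Lemma \ref{lem:normal odd index}, $K$ is again an $\RRR$-group, and by Lemma \ref{lem:hypA}, the section $H := K/N$ satisfies Hypothesis~A.

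The key step is to establish that $H$ is a $2$-group. Invoking Theorem B (proved in Section \ref{sec4}), $G$ is solvable, and hence so is $H$. Solvability of $H$ forces $\Layer(H) = 1$, so $\Fit^*(H) = \Fit(H)$, and together with $\OO_{2'}(H) = 1$ this yields $\Fit^*(H) = \OO_2(H)$. Theorem \ref{th: 2-constrained} then applies and gives that $H$ is a $2$-group. Thus $N$ is the desired normal $2$-complement of $K$, and the chain $N \unlhd K \unlhd G$ --- with $|N|$ and $|G/K|$ odd and $K/N$ a $2$-group --- witnesses that $G$ has $2$-length one.

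It remains to control the real class sizes of the $2$-group $H = K/N$. Since every non-identity element of $H$ is a $2$-element, Lemma \ref{lem:odd quotient consequences}(3), applied to the $\RRR$-group $K$ and with $\overline{z} = \overline{x}$, gives $|\overline{x}^H|_2 = 2^a$ for every $\overline{x} \in \Real(H) \setminus \Center(H)$. As class sizes in a $2$-group are automatically powers of $2$, this means $|\overline{x}^H| = 2^a$ for every non-central real element of $H$. Together with the class size $1$ of the identity (and any real central involutions), this shows that the real class sizes of $H$ lie in $\{1, 2^a\}$, so there are at most two of them. The only substantive obstacle here is Theorem B itself; once solvability is in hand, the present result is pure bookkeeping on top of Lemmas \ref{lem:normal odd index}, \ref{lem:hypA}, \ref{lem:odd quotient consequences} and Theorem \ref{th: 2-constrained}.
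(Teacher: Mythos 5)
Your proof is correct and follows essentially the same route as the paper: invoke Theorem B for solvability, pass to $H=\OO^{2'}(G)/\OO_{2'}(\OO^{2'}(G))$ via Lemmas \ref{lem:normal odd index} and \ref{lem:hypA}, note $\Fit^*(H)=\OO_2(H)$ by solvability, and apply Theorem \ref{th: 2-constrained} to conclude $H$ is a $2$-group. The final bookkeeping on real class sizes (every non-central real class of the $2$-group $H$ has size exactly $2^a$) also matches the paper's argument.
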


\begin{proof}
By Theorem B, we assume that $G$ is solvable. Let $K=\OO^{2'}(G)$ and $H:=K/\OO_{2'}(K)$. Then $H$ satisfies Hypothesis~\href{hyp:A}{A} by Lemma \ref{lem:hypA}. As $H$ is solvable and $\OO_{2'}(H)=1$, $\Fit^*(H)=\Fit(H)=\OO_2(H)$. By Theorem \ref{th: 2-constrained}, $H$ is a $2$-group and thus $K$ has a normal $2$-complement $N=\OO_{2'}(K)$. So $G$ has $2$-length one.
Finally, as $H$ is $2$-group which satisfies Hypothesis~\href{hyp:A}{A}, for any $x\in\Real(H)$, we have $|x^H|=1$ or $2^a$. 
\end{proof}

\section{Proof of Theorem B}\label{sec4}

We will prove Theorem B in this section. We first prove some reduction results.

\begin{lem}\label{lem: reduction} Let $G$ be a finite non-solvable group satisfying Hypothesis~\href{hyp:A}{A}. Let $\overline{G}=G/\OO_2(G)$. Then $L=\Layer(G)$ is a quasi-simple group whose center is a $2$-group and $\overline{G}$ is an almost simple group with socle $\overline{L}\cong L/\Center(L)$.
\end{lem}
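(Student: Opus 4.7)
I would prove the lemma in four steps.

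\textbf{Steps 1 and 2 (nontriviality of $L$ and that $\Center(L)$ is a $2$-group).} First I would verify that $L := \Layer(G)$ is non-trivial and that $\Center(L)$ is a $2$-group. Since $\OO_{2'}(G) = 1$, we have $\Fit(G) = \OO_2(G)$. If $L = 1$, then $\Fit^*(G) = \Fit(G) = \OO_2(G)$, and Theorem~\ref{th: 2-constrained} forces $G$ to be a $2$-group, contradicting non-solvability. For the center: $\Center(L)$ is characteristic in $L$ (which is itself characteristic in $G$), so $\Center(L) \unlhd G$; being abelian hence nilpotent, $\Center(L) \leq \Fit(G) = \OO_2(G)$. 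In particular, $L \cap \OO_2(G) = \Center(L)$, since any normal $2$-subgroup of $L$ must be central (as $L/\Center(L)$ is a product of non-abelian simple groups, any solvable normal subgroup of $L$ lies in $\Center(L)$).

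\textbf{Step 3 ($L$ is quasi-simple).} Next I would show $L$ has a unique component. Suppose, for contradiction, $L = L_1 * L_2 * \cdots * L_k$ with $k \geq 2$; distinct components centralize one another, so in particular $L_2 \leq \Centralizer_G(L_1)$. The strategy is to produce a real $2$-element $y \in L_1 \setminus \Center(L_1)$ and a $2$-element $t \in L_1$ with $y^t = y^{-1}$. When $L_1$ admits an involution $i \notin \Center(G)$, I simply take $y = i$, $t = 1$, and invoke Hypothesis~\href{hyp:A}{A}(2). Otherwise every involution of $L_1$ lies in $\Center(L_1)$; then I would lift an involution of $\bar L_1 := L_1/\Center(L_1)$ (a non-abelian simple group, hence containing involutions) to a $2$-element $y \in L_1 \setminus \Center(L_1)$ of order $\ge 4$, and extract an inverting $2$-element $t \in L_1$ from the reality of $\bar y$ in $\bar L_1$. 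Since $\Center(G) \cap L_1 \leq \Center(L_1)$, we have $y \notin \Center(G)$, and Hypothesis~\href{hyp:A}{A}(1) applied with $T = L_1$ yields that $\Centralizer_G(\langle y, t\rangle)$ is $2$-closed. But $L_2 \leq \Centralizer_G(\langle y, t\rangle)$ and $L_2$ is perfect and quasi-simple, which cannot sit inside a $2$-closed group (its image in the $2'$-quotient would be both perfect and a $2'$-quotient of $L_2$, forcing $L_2$ to be a $2$-group). This contradiction forces $k = 1$.

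\textbf{Step 4 ($\bar G$ is almost simple).} From Steps 2--3, $L$ is quasi-simple, $L \cap \OO_2(G) = \Center(L)$, and $\bar L := L \OO_2(G)/\OO_2(G) \cong L/\Center(L)$ is a non-abelian simple normal subgroup of $\bar G$. It remains to show $\Centralizer_{\bar G}(\bar L) = 1$. Let $K := \Centralizer_G(L)$. Because $L$ is perfect and $\Center(L)$ is abelian, any automorphism of $L$ trivial on $L/\Center(L)$ is itself trivial (the obstruction is a homomorphism $L \to \Center(L)$, which vanishes on the perfect group $L$); hence $\{g \in G : [g,L] \leq \OO_2(G)\} = K$, and $\Centralizer_{\bar G}(\bar L) = K \OO_2(G)/\OO_2(G)$. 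So it suffices to prove $K \leq \OO_2(G)$. Now $K \unlhd G$; the characteristic subgroup $\OO_{2'}(K)$ is normal in $G$, hence contained in $\OO_{2'}(G) = 1$. Moreover, $K$ has no component, for any such component would be subnormal in $G$, hence equal to the unique component $L$, which is impossible since $L \cap K = \Center(L) \ne L$. Therefore $\Fit^*(K) = \OO_2(K)$. Running the argument of Theorem~\ref{th: 2-constrained} for the normal subgroup $K$ (its relevant Hypothesis~A ingredients--Lemma~\ref{lem: centralizers of 2-subgroups} and the Bender/Aschbacher $\Normalizer_G(V)$ input--are inherited from $G$) shows $K$ is a $2$-group, whence $K \leq \OO_2(G)$ and $\Centralizer_{\bar G}(\bar L) = 1$.

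\textbf{Main obstacle.} The hardest step is the degenerate case of Step 3, where every actual involution of a would-be second component lies in $\Center(G)$; here one must lift an involution from $\bar L_1$ through the possibly non-trivial $2$-group $\Center(L_1)$ and still produce both the non-central real $2$-element $y$ and the inverting $2$-element $t$ inside $L_1$. The adaptation of Theorem~\ref{th: 2-constrained} to $K$ in Step 4 is secondary: $K$ is not $\OO^{2'}$-generated in general, but its triviality of layer and of $\OO_{2'}$, together with the inherited centralizer conditions, suffice to conclude.
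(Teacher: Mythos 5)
Your overall architecture parallels the paper's (nontriviality of the layer via Theorem \ref{th: 2-constrained}; uniqueness of the component by trapping a second component inside a $2$-closed centralizer; identification of $\Centralizer_G(L)$ with $\OO_2(G)$), but two of your steps have genuine gaps, and both are repaired in the paper by the one tool you never invoke: Lemma \ref{lem: central real 2-elements} (if all real $2$-elements are central, the group has a normal $2$-complement). The first gap is the degenerate case of your Step 3. When every involution of $L_1$ is central, you propose to lift an involution of $\overline{L}_1=L_1/\Center(L_1)$ to a $2$-element $y\in L_1$ of order at least $4$ and to ``extract an inverting $2$-element $t$ from the reality of $\overline{y}$.'' But reality does not lift through a central extension with $2$-group kernel: $\overline{y}^{\,\overline{t}}=\overline{y}^{\,-1}$ only gives $y^t=y^{-1}z$ with $z\in\Center(L_1)$, and there is no general way to remove $z$ (the paper's lifting lemma, Lemma \ref{lem:centralizers of 2-groups}, requires the kernel to have \emph{odd} order). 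The paper avoids this entirely: if all real $2$-elements of the component $L$ were central in $L$, Lemma \ref{lem: central real 2-elements} would give $L$ a normal $2$-complement, impossible for a perfect group with non-abelian simple central quotient; so a non-central real $2$-element exists outright, and Hypothesis~\href{hyp:A}{A}(1) with $T=L$ supplies the inverting $2$-element.

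The second gap is in your Step 4. Re-running Theorem \ref{th: 2-constrained} on $K=\Centralizer_G(L)$ breaks down precisely in the branch where $K$ has no nontrivial real element of odd order: there Lemma \ref{lem: no odd}(3) yields only a \emph{normal Sylow $2$-subgroup}, i.e.\ $K/\OO_2(K)$ of odd order, and the conditions $\OO_{2'}(K)=1$ and $\Layer(K)=1$ do not then force $K$ to be a $2$-group --- $\Alt_4$ satisfies all of them. The step in Theorem \ref{th: 2-constrained} that converts ``normal Sylow $2$-subgroup'' into ``is a $2$-group'' is exactly the hypothesis $G=\OO^{2'}(G)$, which you concede $K$ need not satisfy; so your claim that the remaining inherited conditions ``suffice to conclude'' is false. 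The paper argues in the opposite direction: every real $2$-element of $K$ must be central in $K$ (otherwise Hypothesis~\href{hyp:A}{A}(1) with $T=K$ would place the quasi-simple $L$ inside a $2$-closed, hence solvable, group), so by Lemma \ref{lem: central real 2-elements} $K$ has a normal $2$-\emph{complement}, which equals $\OO_{2'}(K)\leq\OO_{2'}(G)=1$; thus $K$ itself is a $2$-group and $K=\OO_2(G)$.
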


\begin{proof} Since $G$ satisfies Hypothesis~\href{hyp:A}{A}, $G=\OO^{2'}(G)$ and $ \OO_{2'}(G)=1$. We have $\Fit^*(G)=\OO_2(G)\Layer(G)$. As $G$ is non-solvable,  $\Layer(G)$ is non-trivial by Theorem \ref{th: 2-constrained}. Let $L$ be a component of $G$, that is, $L$ is a perfect quasi-simple subnormal subgroup of $G$. Recall that $\Layer(G)$ is generated by all components of $G$.

We first claim that $L=\Layer(G)$. Suppose by contradiction that $G$ has another component, say $L_1\neq L$.
Since $L$ is non-solvable, by Lemma \ref{lem: central real 2-elements} there exists a real $2$-element $x\in\Real(L)\setminus\Center(L)$. By Hypothesis~\href{hyp:A}{A}(1), there exists a $2$-element $t\in L$ such that $x^t=x^{-1}$ and $\Centralizer_G(\langle x,t\rangle)$ has a normal Sylow $2$-subgroup, so it is solvable. However, by \cite[31.5]{Aschbacher}, $[L_1,L]=1$ and since $\langle x,t\rangle\leq L$, $L_1\leq \Centralizer_G(\langle x,t\rangle)$, which is impossible. Thus $L$ is the unique component of $G$ and so $L=\Layer(G)$. As $\OO_{2'}(G)=1$, $\Center(L)$ is a $2$-group.

Let $C=\Centralizer_G(L)$. We  claim that $C= \OO_2(G)$ which implies that $\overline{G}$ is almost simple with socle $\overline{L}$. In fact, we will show that every real $2$-element of $C$ lies in $\Center(C)$ and so by Lemma \ref{lem: central real 2-elements}, $C$ has a normal $2$-complement which is $\OO_{2'}(C)$. Since $L\unlhd G$, $C\unlhd G$ and so $\OO_{2'}(C)\leq \OO_{2'}(G)=1$, hence $C$ is a $2$-group and thus $C\leq\OO_2(G)$. Furthermore, by \cite[31.12]{Aschbacher} $[\OO_2(G),L]=1$ so $\OO_2(G)\leq C$. Therefore, $C=\OO_2(G)$ as wanted. 

To finish the proof, suppose by contradiction that  there exists a real $2$-element $y\in\Real(C)$ which is not in $\Center(C)$. By Hypothesis~\href{hyp:A}{A}(1), there exists a $2$-element $t\in C$ inverting $y$ and $\Centralizer_G(\langle y,t\rangle)$ has a normal Sylow $2$-subgroup. As $[L,C]=1$ and  $\langle y,t\rangle\leq C$,  $L\leq \Centralizer_G(\langle y,t\rangle)$, which is impossible. This completes our proof.
\end{proof}

In order to classify all the possible finite quasi-simple groups which can appear as $\Layer(G)$ in the previous lemma, we need the following.
\begin{lemma}\label{lem:Structure of H} Let $G$ be a finite non-solvable group satisfying Hypothesis~\href{hyp:A}{A}. Let $L=\Layer(G)$ and $\overline{G}=G/\OO_2(G)$. If $x,z\in \Real(L)\setminus \Center(L)$  and $x$ is a $2$-element, then 
 \begin{equation}\label{eqn}|z^L|_2\leq |z^G|_2\leq 2^a=|x^G|_2\leq |\overline{G}:\overline{L}|_2\cdot |x^L|_2\leq |\Out(\overline{L})|_2\cdot |x^L|_2.\end{equation}
\end{lemma}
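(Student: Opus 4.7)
The chain has five parts, most of which are essentially bookkeeping. My plan is first to verify that both $x$ and $z$ lie in $\Real(G) \setminus \Center(G)$, so that the 2-part hypotheses on $G$ apply. Reality in $G$ is immediate from $x,z\in\Real(L)$ since $L\le G$. For non-centrality, any element of $L$ that lies in $\Center(G)$ a fortiori centralizes $L$ and hence lies in $\Center(L)$; so $x,z\notin\Center(L)$ forces $x,z\notin\Center(G)$. The first inequality $|z^L|_2\leq |z^G|_2$ then follows from Lemma~\ref{lem: no odd}(1) applied with $N=L$. The second inequality $|z^G|_2\leq 2^a$ is Hypothesis~\href{hyp:A}{A}(4), and the equality $|x^G|_2=2^a$ is Hypothesis~\href{hyp:A}{A}(3) applied to the 2-element $x$.

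The only nontrivial step is the fourth inequality $|x^G|_2\leq |\overline{G}:\overline{L}|_2 \cdot |x^L|_2$. Here the key input is Lemma~\ref{lem: reduction}, which gives $\Centralizer_G(L)=\OO_2(G)$; in particular $\OO_2(G)\leq \Centralizer_G(x)$ because $x\in L$. Since $L\unlhd G$, the set $L\Centralizer_G(x)$ is a subgroup, and standard index bookkeeping yields
\[ |x^G| = |G:L\Centralizer_G(x)|\cdot |L\Centralizer_G(x):\Centralizer_G(x)| = |G:L\Centralizer_G(x)|\cdot |x^L|, \]
using $L\Centralizer_G(x)/\Centralizer_G(x)\cong L/\Centralizer_L(x)$. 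Since $L\OO_2(G) \subseteq L\Centralizer_G(x)$, the first factor on the right divides $|G:L\OO_2(G)| = |\overline{G}:\overline{L}|$; taking 2-parts gives the desired bound.

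For the last inequality, Lemma~\ref{lem: reduction} says $\overline{G}$ is almost simple with socle $\overline{L}$, so $\overline{G}/\overline{L}$ embeds into $\Out(\overline{L})$; in particular $|\overline{G}:\overline{L}|_2 \leq |\Out(\overline{L})|_2$, and multiplying by $|x^L|_2$ finishes the chain. The main conceptual obstacle in the broader argument was establishing the structural information in Lemma~\ref{lem: reduction} (that $L$ is the unique component and that $\Centralizer_G(L)=\OO_2(G)$); once these are in hand, the proof of the present lemma is a clean assembly of the ingredients, with no further group-theoretic surprises.
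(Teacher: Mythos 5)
Your proposal is correct and follows essentially the same route as the paper: the first three relations come from normality of $L$ plus Hypothesis A(3,4), and the last two from the factorization $|x^G|=|G:L\Centralizer_G(x)|\cdot|x^L|$ together with $\OO_2(G)\leq\Centralizer_G(x)$ and the almost-simplicity of $\overline{G}$ from Lemma \ref{lem: reduction}. Your explicit check that $x,z\in\Real(G)\setminus\Center(G)$ (via $L\cap\Center(G)\leq\Center(L)$) is a small point the paper leaves implicit, but otherwise the arguments coincide.
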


\begin{proof}  Notice that $L=\Layer(G)$ is a quasi-simple group by Lemma \ref{lem: reduction} and by \cite[$31.12$]{Aschbacher}, $[\OO_2(G),L]=1$. 
Let $x,z\in \Real(L)\setminus\Center(L)$, where $x$ is  a $2$-element. By Hypothesis~\href{hyp:A}{A}(3,4), $|z^G|_2\leq 2^a=|x^G|_2$. Since $L\unlhd G$, $|z^L|_2\leq |z^G|_2\leq 2^a$. 
For the remaining inequalities,  observe that $\OO_2(G)\leq \Centralizer_G(x)$ and $$|x^G|=|G:L\Centralizer_G(x)|\cdot |L\Centralizer_G(x):\Centralizer_G(x)|.$$ We have $|L\Centralizer_G(x):\Centralizer_G(x)|=|L:\Centralizer_L(x)|=|x^L|$ and $$|G:L\Centralizer_G(x)|=|G:L\OO_2(G)|/|\Centralizer_G(x):\Centralizer_{G}(x)\cap L\OO_2(G)|.$$ As $|G:L\OO_2(G)|=|\overline{G}:\overline{L}|$ divides $|\Out(\overline{L})|$, by taking the $2$-parts, we obtain $$|x^G|_2\leq |\overline{G}:\overline{L}|_2\cdot |x^L|_2\leq |\Out(\overline{L})|_2\cdot |x^L|_2.$$
The proof is now complete.
\end{proof}

 Using the classification of finite simple groups, we can show that $L$ in the previous lemma is isomorphic to  a special linear group $\SL_2(q)$, where $q\ge 5$ is an odd prime  power. We  defer the proof of the following theorem until next section. 

\begin{thm}\label{th: classification}
Let $L$ be a quasi-simple group with center $Z$ and let $S=L/Z$. Suppose that $Z$ is a $2$-group and the following conditions hold.

\begin{itemize}
\item[$(1)$] If $i$ is a non-central involution of $L$, then $\Centralizer_L(i)$ is $2$-closed.
\item[$(2)$] If $x,z\in L$ are non-central real elements and $x$ is a $2$-element, then $|z^L|_2\leq |\Out(S)|_2\cdot |x^L|_2$.
\end{itemize}
Then $L\cong\SL_2(q)$ with $q\ge 5$ odd.
\end{thm}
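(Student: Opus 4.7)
The plan is to apply the classification of finite simple groups to $S = L/Z$ and to eliminate, family by family, every possibility other than $S \cong \PSL_2(q)$ with $q \geq 5$ odd. For each alternative $S$ I would produce either a non-central involution $i \in L$ whose centralizer $C_L(i)$ contains a non-$2$-closed section (violating (1)), or an explicit pair $(x,z)$ of non-central real elements with $x$ a $2$-element and $|z^L|_2 > |\Out(S)|_2 \cdot |x^L|_2$ (violating (2)). The first condition is the principal engine: requiring the centralizer of a non-central involution to be $2$-closed forbids any non-abelian composition factor inside the centralizer, an extremely restrictive property that already fails for most quasi-simple groups.

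I would first dispatch the sporadic simple groups and their covers with $2$-group centre by direct consultation of the Atlas, since the involution centralizers are tabulated and manifestly fail to be $2$-closed in every case. Alternating groups $A_n$ with $n \geq 7$ are ruled out uniformly using the double transposition $i = (1\,2)(3\,4)$, whose $A_n$-centralizer contains the perfect group $A_{n-4}$ as a composition factor (hence is not $2$-closed) as soon as $n \geq 9$; the remaining small $n$ either coincide with a $\PSL_2(q)$ via an exceptional isomorphism or are handled by direct computation, and the covers $2\cdot A_n$ lift the same argument.

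For Lie type groups in defining characteristic $2$, I would use that real $2$-elements are unipotent with $2$-part of class size bounded by a polynomial in $q$ of relatively small degree, whereas a real regular semisimple element from a split torus has class size whose $2$-part equals the full $|L|_2$; since $|\Out(S)|_2$ only absorbs field-graph contributions, condition (2) fails for $\SL_2(2^f)$ with $f \geq 2$, for the Suzuki groups, and for all higher-rank groups in characteristic $2$. For Lie type groups in odd characteristic of (untwisted) rank $\geq 2$, the centralizer of any semisimple involution is a reductive subgroup of maximal rank whose derived group contains an $\SL_2$ by Borel–de Siebenthal subsystem analysis; such a centralizer is never $2$-closed, ruling out condition (1).

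The remaining case is rank one in odd characteristic: the quasi-simple groups with simple quotient $\PSL_2(q)$ and $2$-group centre are $\PSL_2(q)$ itself (with $Z = 1$) and $L = \SL_2(q)$ (with $|Z| = 2$). In $\SL_2(q)$ with $q \geq 5$ odd, the unique involution is the central element $-I$, so condition (1) is vacuous and this is precisely the target conclusion. In $\PSL_2(q)$, non-central involutions have dihedral centralizer of order $q \pm 1$ whose Sylow $2$-subgroup fails to be normal except in extremely small cases, and those small cases in turn fail condition (2) via an odd-order real semisimple element of large class size. The main obstacle I anticipate is the uniform treatment of involution centralizers in odd-characteristic Lie type groups of rank $\geq 2$: one must verify across all isogeny types, twisted and untwisted forms, and all involution conjugacy classes that $C_L(i)$ always contains a non-$2$-closed subgroup, which will be case-heavy and will likely require standard structural input from the literature on finite groups of Lie type.
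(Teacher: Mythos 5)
Your overall strategy (use condition (1) to force severe restrictions on involution centralizers, then finish borderline cases with condition (2)) matches the paper's in spirit, but your principal mechanism fails in a family of cases that must then be handled differently, and you have not indicated how. Concretely, your claim that for odd-characteristic Lie type groups of rank $\geq 2$ the centralizer of a semisimple involution is ``never $2$-closed'' because it contains an $\SL_2$ is false over small fields: take $L=\Sp_4(3)$, whose center has order $2$. Every non-central involution of $L$ has $-1$-eigenspace of dimension $2$, so its centralizer is $\Sp_2(3)\times\Sp_2(3)\cong\SL_2(3)\times\SL_2(3)$, which \emph{is} $2$-closed ($Q_8\times Q_8$ is a normal Sylow $2$-subgroup). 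Thus $\Sp_4(3)$ satisfies condition (1) and can only be excluded via condition (2). The same phenomenon occurs for the covers of $\PSU_4(3)$, $\Omega_7(3)$, $\mathrm{P}\Omega_8^+(3)$, and for the characteristic-$2$ groups with even Schur multiplier such as $\Sp_6(2)$, $\PSU_6(2)$, $\Omega_8^+(2)$, $\mathrm{F}_4(2)$, ${}^2\mathrm{E}_6(2)$, $\PSL_3(4)$, whose relevant involution centralizers are solvable (and condition (1) merely forces solvability, since $2$-closed implies solvable by Feit--Thompson, not the other way around). The paper isolates exactly this list by classifying quasi-simple groups with a \emph{solvable} involution centralizer (its Lemma \ref{lem:solvable involution centralizer}, via the GLS tables) and then kills each one by exhibiting an odd-order real element $z$ with $\nu_2(|z^S|)>\nu_2(|\Out(S)|)+r_2(\Mult(S))+e_2(S)$ (its Table \ref{tab}). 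Your proposal contains no analogue of this computation, and without it the argument does not close.

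The second gap is the case in which $L$ has no non-central involution at all, so that condition (1) is vacuous; you notice this for $\SL_2(q)$ but not systematically. By Griess's theorem the quasi-simple groups with all involutions central are precisely $\SL_2(q)$ ($q\ge 5$ odd) and $2\cdot\Alt_7$; the latter survives your alternating-group argument (the class of double transpositions in $\Alt_7$ lifts to elements of order $4$ in $2\cdot\Alt_7$, so ``the covers lift the same argument'' fails, and in any case $\Alt_{n-4}$ is perfect only for $n\ge 9$) and must be eliminated by condition (2) with explicit real elements of orders $3$ and $4$. For the center-trivial case your family-by-family sweep is workable but laborious; the paper instead quotes Suzuki's classification of simple groups whose involution centralizers are all $2$-closed, which reduces at once to $\Alt_6$, $\PSL_2(p)$ with $p$ Fermat or Mersenne, $\PSL_2(2^f)$, $\PSL_3^{\pm}(2^f)$ and ${}^2\mathrm{B}_2(2^{2f+1})$, each then excluded by condition (2) against a $2$-central involution $x$ with $|x^L|_2=1$. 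You should also be careful in characteristic $2$ that the regular semisimple elements of a split torus need not be real in types $\mathrm{A}_{n\ge 2}$, $\mathrm{D}_{\mathrm{odd}}$ and $\mathrm{E}_6$, so the witness $z$ must be chosen with eigenvalue data closed under inversion.
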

Let $p$ be a prime. If $n\ge 1$ is an integer, then the $p$-adic valuation of $n$, denoted by $\nu_p(n)$, is the highest exponent $\nu$ such that  $p^\nu$  divides $n$. Hence $n_p=p^{\nu_p(n)}$. Notice that $\nu_p(xy)=\nu_p(x)+\nu_p(y)$ for all integers $x,y\ge 1.$ 

The following number theoretic result is obvious.

\begin{lem}\label{lem:2-adic} Let $m,k\ge 1$ be integers, where $m\ge 3$ is odd. Then $\nu_2(m^{2^k}-1)\ge k+2$.
\end{lem}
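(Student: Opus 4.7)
The plan is to proceed by induction on $k$, using the factorization $m^{2^{k+1}}-1 = (m^{2^k}-1)(m^{2^k}+1)$.

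For the base case $k=1$, note that $m^2-1 = (m-1)(m+1)$ is the product of two consecutive even integers (since $m$ is odd and $m\geq 3$), so exactly one of them is divisible by $4$ and the other by $2$ but not $4$. This gives $\nu_2(m^2-1)\geq 3 = 1+2$, as required.

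For the inductive step, assume $\nu_2(m^{2^k}-1)\geq k+2$. Since $m$ is odd, $m^{2^k}$ is odd as well, and therefore $m^{2^k}+1$ is even, giving $\nu_2(m^{2^k}+1)\geq 1$. Combining with the additivity of $\nu_2$ on products,
\[
\nu_2(m^{2^{k+1}}-1) = \nu_2(m^{2^k}-1) + \nu_2(m^{2^k}+1) \geq (k+2)+1 = (k+1)+2,
\]
which completes the induction.

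There is no real obstacle here; this is precisely why the authors call the statement obvious. The only thing to be slightly careful about is the base case, where one genuinely uses that both $m-1$ and $m+1$ are even (which requires $m$ odd) and that they are consecutive even numbers (which forces one of them to be divisible by $4$). An alternative one-shot argument is to use the telescoping factorization
\[
m^{2^k}-1 = (m-1)(m+1)\prod_{j=1}^{k-1}\bigl(m^{2^j}+1\bigr),
\]
where the leading pair contributes at least $2^3$ and each of the remaining $k-1$ factors is even, yielding at least $2^{k+2}$ in total.
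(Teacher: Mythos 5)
Your proof is correct and follows essentially the same route as the paper: induction on $k$ with the factorization $m^{2^{k+1}}-1=(m^{2^k}-1)(m^{2^k}+1)$, the base case handled by observing that $m-1$ and $m+1$ are consecutive even integers. The telescoping product you mention at the end is just an unrolled version of the same induction.
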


\begin{proof} We proceed by induction on $k\ge 1.$ Suppose that $m\equiv \epsilon$ (mod $4$), where $\epsilon=\pm 1.$ For the base case, assume $k=1$. Clearly, $m-\epsilon$ is divisible by $4$ while $m+\epsilon$ is divisible by $2$. So $\nu_2(m^2-1)\ge 3=k+2.$

Assume that $\nu_2(m^{2^t}-1)\ge t+2$ for some integer $t\ge 1.$ We have $m^{2^{t+1}}-1=(m^{2^t}-1)(m^{2^t}+1)$. Since $m$ is odd, $\nu_2(m^{2^t}+1)\ge 1$ and $\nu_2(m^{2^t}-1)\ge t+2$ by induction hypothesis. Therefore,  $\nu_2(m^{2^{t+1}}-1)=\nu_2(m^{2^t}+1)+\nu_2(m^{2^t}-1)\ge 1+(t+2)=(t+1)+2$. By the principle of mathematical induction, the lemma follows. 
\end{proof}

We are now ready to prove Theorem B, which we will restate here.
\begin{thm}\label{th: 2-parts}
If $G$ is an $\RRR$-group, then $G$ is solvable.
\end{thm}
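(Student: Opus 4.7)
The plan is a contradiction argument chaining together the machinery already developed. Assume for contradiction that $G$ is a non-solvable $\RRR$-group. First, reduce to a convenient quotient: put $K = \OO^{2'}(G)$ and $H = K/\OO_{2'}(K)$. By Lemma~\ref{lem:normal odd index}, $K$ is itself an $\RRR$-group; by Lemma~\ref{lem:hypA}, $H$ satisfies Hypothesis~\href{hyp:A}{A}. Since $G/K$ has odd order and $\OO_{2'}(K)$ is solvable, $H$ inherits the non-solvability of $G$. So, replacing $G$ by $H$ if necessary, I may assume that $G$ itself satisfies Hypothesis~\href{hyp:A}{A} and is non-solvable.

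Next, extract a quasi-simple component. If one had $\Fit^*(G) = \OO_2(G)$, Theorem~\ref{th: 2-constrained} would force $G$ to be a $2$-group, contradicting non-solvability. So $\Layer(G) \neq 1$, and Lemma~\ref{lem: reduction} pins it down: $L := \Layer(G)$ is a single quasi-simple group with $2$-group center, and $\overline{G} := G/\OO_2(G)$ is almost simple with socle $\overline{L} := L/\Center(L)$. Combining Hypothesis~\href{hyp:A}{A}(2)---which forces centralizers of non-central involutions in $L$ to be $2$-closed---with the class-size inequality of Lemma~\ref{lem:Structure of H} puts $L$ into the setting of Theorem~\ref{th: classification}, so $L \cong \SL_2(q)$ for some odd prime power $q \ge 5$.

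The main obstacle is ruling out this surviving case. Inside $L \cong \SL_2(q)$ the real semisimple classes split into two families of sizes $q(q-1)$ and $q(q+1)$, coming from the non-split and split tori respectively; their $2$-parts $(q-1)_2$ and $(q+1)_2$ are unequal, with one equal to $2$ and the other at least $4$. Every non-central real $2$-element of $L$ lies in the torus with the larger $2$-part, yet its $L$-class size has the \emph{smaller} $2$-part (namely $2$); conversely, the real odd-order elements of the opposite torus have $L$-class size with $2$-part at least $4$. Feeding such a pair into Lemma~\ref{lem:Structure of H} yields $\max\{(q-1)_2,(q+1)_2\} \le 4 f_2$ when $q = p^f$ (using $|\Out(\PSL_2(q))|_2 = 2 f_2$). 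By Lemma~\ref{lem:2-adic} this already fails once $\nu_2(p\pm 1) \ge 3$, killing most $q$. The hard step is the tight residual families with $p \equiv \pm 3 \pmod 8$, which I would dispatch by tracking how the diagonal and field automorphisms of $\PSL_2(q)$ permute real classes of $L$: no intermediate extension $\overline{L} \le \overline{G} \le \Aut(\overline{L})$ can simultaneously inflate the centralizers on the split and non-split tori enough to make every non-central real $2$-element of $G$ have $G$-class size with a common $2$-part, contradicting Hypothesis~\href{hyp:A}{A}(3).
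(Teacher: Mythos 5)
Your reduction follows the paper exactly: pass to $H=\OO^{2'}(G)/\OO_{2'}(\OO^{2'}(G))$ via Lemmas \ref{lem:normal odd index} and \ref{lem:hypA}, use Theorem \ref{th: 2-constrained} to force $\Layer(H)\neq 1$, and invoke Lemmas \ref{lem: reduction}, \ref{lem:Structure of H} and Theorem \ref{th: classification} to conclude $L\cong\SL_2(q)$. Your opening move against $\SL_2(q)$ also matches the paper: the two torus elements $x$ (a $2$-element of order $(q-\eta)_2=2^k$ with $|x^L|_2=2$) and $z$ (odd order, $|z^L|_2=2^k$) fed into \eqref{eqn} give $2^k\le 2^{c+2}$ where $c=\nu_2(f)$, and Lemma \ref{lem:2-adic} gives $2^k\ge 2^{c+2}$ when $f$ is even, so only the boundary cases survive.

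The gap is that you never actually eliminate those boundary cases; you only state that you "would dispatch" them by tracking how $\delta$ and $\varphi$ permute real classes of the tori. That plan cannot succeed as described, because in the tight cases the semisimple classes genuinely satisfy all the inequalities with equality: for instance with $f$ odd, $k=2$ and $\overline{H}\cong\PGL_2(q)$, one has $|z^H|_2=2^k=4=|\Out(\overline{L})|_2\cdot|x^L|_2$, so no contradiction can be extracted from torus elements alone. The paper's proof needs a third ingredient that your sketch omits entirely: the real class of \emph{unipotent} elements of order $p$. When $\overline{H}\cong\PGL_2(q)$ this class has size $q^2-1$ with $2$-part $2^{k+1}$, and when $f$ is even the corresponding class in $L$ has $2$-part $2^k\ge 2^{c+2}$, jumping to $2^{k+1}\ge 2^{c+3}$ when the diagonal automorphism is present (since that class is $\varphi$-invariant but not $\delta$-invariant); in every subcase this strictly exceeds $|\overline{H}:\overline{L}|_2\cdot|x^L|_2$, which is what violates \eqref{eqn}. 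One also needs the case split on $|\overline{H}/\overline{L}|_2\in\{1,\dots,2^c,2^{c+1}\}$ inside $\Out(\PSL_2(q))\cong\bbZ_2\times\bbZ_f$, together with the lifting of the order-$p$ real element from $\overline{H}$ to $H$ via \cite[Lemma 2.2]{GNT}. Without this the proof is incomplete at its hardest point.
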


\begin{proof}
Let $G$ be a counterexample to the theorem of minimal order. Then $G$ is non-solvable and by Lemma \ref{lem:hypA}, $H$ satisfies Hypothesis~\href{hyp:A}{A}, where $H=K/\OO_{2'}(K)$ and $K=\OO^{2'}(G)$. 
As $G$ is non-solvable, $H$ is non-solvable. Let $L=\Layer(H)$. By Lemmas \ref{lem: reduction}, \ref{lem:Structure of H} and Hypothesis~\href{hyp:A}{A}(2), $L$ is a quasi-simple group  satisfying the hypothesis of Theorem \ref{th: classification}, so $L\cong \SL_2(q)$ with $q\ge 5$ an odd prime power. Moreover,  $\OO^{2'}(\overline{H})=\overline{H}$ and $\overline{H}$ is almost simple with socle $\overline{L}$, where $\overline{H}=H/\OO_2(H)$.

Write $q=p^f\ge 5$, where $p>2$ is a prime and $f\ge 1$. It is well-known that $\Out(\PSL_2(q))\cong \langle \delta\rangle\times\langle \varphi\rangle\cong \bbZ_2\times \bbZ_f$, where $\delta$ is a diagonal automorphism of order $2$ and $\varphi$ is a field automorphism of order $f$ of $\PSL_2(p^f)$. Assume $q\equiv \eta$ (mod $4$) with $\eta=\pm 1$. We have $k:=\nu_2(q-\eta)\ge 2$, $\nu_2(q+\eta)=1$ and $|L|_2=(q^2-1)_2=2^{k+1}$.

Now $L$ has two non-central real elements $z$ and $x$ (which lie in the cyclic subgroups of $L$ of order $q+\eta$ and $q-\eta$) of  order $(q+\eta)/2$ and  $2^k>2$, respectively. It is easy to check that $|x^L|_2=2$ and $|z^L|_2=2^{k}$. Moreover, $z^L$ is invariant under the diagonal automorphisms of $L$. As $\overline{H}/\overline{L}$ is a  subgroup of $\Out(\overline{L})\cong \bbZ_2\times\bbZ_f$ with $\OO^{2'}(\overline{H}/\overline{L})=\overline{H}/\overline{L}$,  $\overline{H}/\overline{L}$ is an abelian $2$-group of order at most $2^{c+1}$, where $c=\nu_2(f)$.

(a) Assume that $f$ is odd. Then $\PSL_2(q)\cong \overline{L}\unlhd \overline{H}\leq \PGL_2(q)$.

If $\overline{H}\cong \PSL_2(q)$, then $H=L\OO_2(H)$. In this case, we see that $|z^H|_2=|z^L|_2=2^k>2=|x^L|_2=|x^H|_2$ violating Equation \eqref{eqn}.

Assume that $\overline{H}\cong\PGL_2(q)$. From the character table of $\PGL_2(q)$ (see \cite[Table III]{Steinberg}), $\overline{H}\cong \PGL_2(q)$ contains a real element $\overline{y}$ of order $p$ (labelled by $A_2$) with $|\overline{y}^{\overline{H}}|=q^2-1$. Since $o(\overline{y})=p$ is odd, by \cite[Lemma $2.2$]{GNT} there exists a real element $w\in H$ of order $p$ such that $\overline{w}=\overline{y}$. Now $|w^H|_2\ge |\overline{y}^{\overline{H}}|_2 = 2^{k+1}>4=|\Out(\overline{L})|_2\cdot |x^L|_2$, violating  Equation \eqref{eqn}.

(b) Assume $f$ is even.  We have that $q\equiv 1$ (mod $4$) and since  $p$ is odd, by Lemma \ref{lem:2-adic} we have $k=\nu_2(q-1)\ge c+2$. Recall that $c=\nu_2(f)$.

From \cite[Theorem 38.1]{Dornhoff}, $L\cong \SL_2(q)$ has a real element $$y=\left(\begin{array}{cc}1 & 0 \\1 & 1\end{array}\right)$$ of order $p$ (labelled by $c$). The image $\overline{y}$ of $y$ in $\overline{L}\cong \PSL_2(q)$  is also a real element of order $p$ and $|y^L|=|\overline{y}^{\overline{L}}|=(q^2-1)/2$. Hence $|y^H|_2\ge |\overline{y}^{\overline{L}}|_2=2^k$ as $|\PSL_2(q)|_2=2^k$.

Assume that $|\overline{H}/\overline{L}|_2\leq 2^c$. We have $|\overline{H}:\overline{L}|_2\cdot |x^L|_2\leq 2^{c+1}<2^{c+2}\leq 2^k\leq |y^H|_2$, contradicting Equation \eqref{eqn}.

Finally, we assume that $|\overline{H}/\overline{L}|_2=2^{c+1}$ and so $\overline{H}/\overline{L}=\langle \delta\rangle\times\langle\varphi^m\rangle$ with $m=f/2^c$. We know that $\overline{y}\in\overline{L}$ is $\varphi$-invariant but not $\delta$-invariant, thus $|\overline{y}^{\overline{H}}|_2=2^{k+1}$ and hence $|y^H|_2\ge |\overline{y}^{\overline{H}}|_2=2^{k+1}\ge 2^{c+3}$. Now $|\Out(\overline{L})|_2\cdot |x^L|_2=2^{c+2}<2^{c+3}\leq |y^H|_2$, which violates Equation \eqref{eqn} again.
The proof is now complete.
\end{proof}

\section{Quasi-simple groups}\label{sec:5}
The main purpose of this section is to prove Theorem \ref{th: classification}.  We first prove some easy results which will be needed in our classification.

For a prime $p$ and a group $X$, the $p$-rank of $X$, denoted by $r_p(X)$, is the maximum rank of an elementary abelian $p$-subgroup of $X$. Recall that the $p$-rank of an elementary abelian $p$-group of order $p^k$ is $k$.  The following easy result should be known.

\begin{lem}\label{lem: involution centralizers in factor groups} Let $X$ be a finite group and let $Z$ be a subgroup of $\Center(X)$. Let $i\in X$ be a non-central involution. Let $r$ be the $2$-rank of $Z$ and let $\overline{X}=X/Z$. Let $T\leq X$ be the full inverse image of $\Centralizer_{\overline{X}}(\overline{i})$. Then $T/\Centralizer_X(i)$ is an elementary abelian $2$-group of order at most $2^r$ and $|i^X|_2\leq 2^r\cdot |\overline{i}^{\overline{X}}|_2$.
\end{lem}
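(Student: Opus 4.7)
The plan is to construct an explicit homomorphism from $T$ into $Z$ whose kernel is $\Centralizer_X(i)$ and whose image lies in the 2-torsion of $Z$, and then extract the numerical bound via the usual index factorization.

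First I would observe that, by definition of $T$, for every $t\in T$ the image $\overline{t}$ commutes with $\overline{i}$, so $i^{t}i^{-1}\in Z$. Hence the assignment
\[
\phi : T \longrightarrow Z, \qquad \phi(t) = i^{t}i^{-1}
\]
is well-defined. I would check that $\phi$ is a homomorphism using that $Z\le \Center(X)$: since $\phi(t_1)\in Z$ is central, $i^{t_1t_2} = (\phi(t_1)i)^{t_2} = \phi(t_1)\,i^{t_2} = \phi(t_1)\phi(t_2)i$, so $\phi(t_1t_2)=\phi(t_1)\phi(t_2)$. The kernel is exactly the set of $t\in T$ with $i^t=i$, i.e. $\Centralizer_X(i)$, which therefore is normal in $T$ and yields an embedding $T/\Centralizer_X(i)\hookrightarrow Z$.

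Next I would show the image lies in the subgroup of elements of order at most $2$ in $Z$. Since $\phi(t)$ is central and $i^2=1$, we compute $\phi(t)^{2} = (i^{t}i^{-1})^{2} = (i^{2})^{t}(i^{-1})^{2} = 1$. Hence $\phi(T)$ is contained in $\Omega_1(Z_{(2)})$, the (elementary abelian) subgroup of involutions together with the identity in the Sylow $2$-subgroup of the abelian group $Z$. By the definition of $2$-rank, $|\Omega_1(Z_{(2)})|=2^{r}$, which gives the first assertion: $T/\Centralizer_X(i)$ is elementary abelian of order dividing $2^{r}$.

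For the index inequality, note that $T$ is the full preimage of $\Centralizer_{\overline{X}}(\overline{i})$, so $|X:T| = |\overline{X}:\Centralizer_{\overline{X}}(\overline{i})| = |\overline{i}^{\overline{X}}|$. Then
\[
|i^{X}| = |X:\Centralizer_X(i)| = |X:T|\cdot |T:\Centralizer_X(i)| = |\overline{i}^{\overline{X}}|\cdot |T/\Centralizer_X(i)|,
\]
and taking $2$-parts together with the bound $|T/\Centralizer_X(i)|\le 2^{r}$ established above yields $|i^{X}|_{2}\le 2^{r}\cdot |\overline{i}^{\overline{X}}|_{2}$, as required. There is no real obstacle here; the only point worth being careful about is justifying that the $2$-torsion of the abelian group $Z$ has order exactly $2^{r}$, which is immediate from the structure theorem for finitely generated abelian groups and the definition of $2$-rank.
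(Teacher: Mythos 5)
Your proposal is correct and follows essentially the same route as the paper: the paper also observes that $i^g=iz$ with $z\in Z$ of order dividing $2$, deduces that squares of elements of $T$ centralize $i$, and counts cosets of $\Centralizer_X(i)$ in $T$ via an injection into the group $\Omega$ of elements of order at most $2$ in $Z$, before taking $2$-parts in the factorization $|i^X|=|X:T|\cdot|T:\Centralizer_X(i)|$. Your packaging of that injection as the homomorphism $t\mapsto i^{t}i^{-1}$ is just a cleaner formulation of the same argument.
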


\begin{proof} Let $g\in T$. Then $i^g=iz$ for some $z\in Z$. As $i^2=1$ and $iz=zi$, we have $z^2=1$. Now $\Centralizer_X(i)^g=\Centralizer_X(i^g)=\Centralizer_X(iz)=\Centralizer_X(i)$, so $\Centralizer_X(i)\unlhd T.$ Moreover, $i^{g^2}=(iz)^g=i^gz=i$, hence $g^2\in\Centralizer_X(i)$. Thus $T/\Centralizer_X(i)$ is an elementary abelian $2$-group.
 Let $\Omega=\{x\in Z: x^2=1\}$. Clearly $\Omega$ is an elementary abelian $2$-subgroup of $X$ of order $2^r$. For each $h\in T$, there exists $z\in \Omega$ such that $i^h=iz$. Moreover, if $i^h=i^k$ for some $h,k\in T$, then $h\Centralizer_X(i)=k\Centralizer_X(i)$. Thus there is an injective map from the set of left cosets of $\Centralizer_X(i)$ in $T$ to $\Omega$ and hence $|T:\Centralizer_X(i)|\leq 2^r$.
Now  $$|i^X|_2=|X:T|_2\cdot |T:\Centralizer_X(i)|_2=|\overline{i}^{\overline{X}}|_2\cdot  |T:\Centralizer_X(i)|\leq 2^r\cdot |\overline{i}^{\overline{X}}|_2.$$ The proof is complete. 
\end{proof}

We next determine all quasi-simple groups which have an involution $i$ whose centralizer is solvable. 
Notice the isomorphisms $\PSU_4(2)\cong\PSp_4(3),\PSL_2(7)\cong\PSL_3(2),\Alt_5\cong\PSL_2(4)\cong\PSL_2(5),\Alt_6\cong\PSL_2(9)$ and $\Alt_8\cong\PSL_4(2)$.

\begin{lem}\label{lem:solvable involution centralizer} Let $L$ be a quasi-simple group and let $S=L/\Center(L)$. Suppose that $L$ has an involution $i$ such that $\Centralizer_L(i)$ is solvable. Then one of the following holds.

\begin{itemize}
\item[$(i)$] $L\cong \rm{M}_{11}$  or $S\cong  \rm{M}_{12}, \rm{M}_{22},\rm{Fi}_{22}$.
\item[$(ii)$]  $L\cong\Alt_n$  $(5\leq n\leq 12)$, $2\cdot \Alt_n$ $(8\leq n\le 12)$ or $3\cdot \Alt_n$ $(6\leq n\leq 7)$.
\item[$(iii)$] $S\cong \PSL_2(q),\PSL_3(q),\PSU_3(q),\Sp_4(q),{}^2{\rm{B}}_2(q)$ with $q=2^f$.

\item[$(iv)$] $S\cong \PSL_n(2)$ $(n=4,5,6),$ $\PSU_n(2)$ $(4\leq n\leq 9)$, $\Sp_{2n}(2)$ $(3\leq n\leq 5)$, $\Omega_{2n}^\pm(2)$ $(4\leq n\leq 5)$, ${}^3\rm{D}_4(2),{}^2\rm{F}_4(2)',\rm{F}_4(2),{}^2\rm{E}_6(2)$.                             
\item[$(v)$] $S\cong \PSL_3(3),\PSL_4(3),\PSU_3(3),\PSU_4(3),\PSp_4(3),\Omega_7(3),\rm{P}\Omega_8^+(3),\rm{G}_2(3)$.
\item[$(vi)$] $L\cong\PSL_2(q)$ with $q$ odd.
\end{itemize}

\end{lem}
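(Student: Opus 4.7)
The strategy is to reduce the hypothesis to the simple quotient $S = L/\Center(L)$ and then invoke the classification of finite simple groups together with standard information about involution centralizers. First, $i$ must be non-central: if $i \in \Center(L)$ then $\Centralizer_L(i) = L$ is non-solvable. Let $\bar i \in S$ denote the image of $i$; since $\bar i \ne 1$ and $i^2 = 1$, $\bar i$ is an involution of $S$. Applying Lemma~\ref{lem: involution centralizers in factor groups} with $X = L$ and the central subgroup $\Center(L)$, the preimage $T$ of $\Centralizer_S(\bar i)$ in $L$ is an extension of $\Centralizer_L(i)$ by an elementary abelian $2$-group, so $T$ is solvable, and hence so is $\Centralizer_S(\bar i) \cong T/\Center(L)$. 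Thus the problem splits into two tasks: (a) classify the simple groups $S$ having an involution with solvable centralizer, and (b) decide, for each such $S$, which quasi-simple covers $L$ admit a lift of $\bar i$ to an honest involution of $L$.

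For task (a) I will argue by case analysis based on the CFSG. For $S = \Alt_n$ with $n \ge 5$, every involution has cycle type $(2^k,1^{n-2k})$ and its $\Sym_n$-centralizer is $(\Sym_2 \wr \Sym_k) \times \Sym_{n-2k}$; solvability forces $k \le 4$ and $n - 2k \le 4$, hence $n \le 12$. For sporadic $S$ a finite check against the ATLAS produces the list in (i). For $S$ of Lie type in characteristic $2$, the centralizer of an involution has the form $U \rtimes L_0$ with $U$ a unipotent $2$-group and $L_0$ essentially a Levi factor; solvability holds iff $L_0$ has no Lie component of sufficient rank, which is automatic in rank at most $2$, yielding (iii), and otherwise forces the field to be $\bbF_2$ and the rank/twist small, giving (iv) after consulting the Aschbacher--Seitz tables of unipotent classes. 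For $S$ of Lie type in odd characteristic, the centralizer of a semisimple involution is (up to isogeny) a central product of two classical groups of smaller rank; solvability forces these ranks to be very small, and this isolates the $\bbF_3$ entries in (v) together with (vi), where $\Centralizer_{\PSL_2(q)}(i)$ is the dihedral group of order $q \pm 1$, automatically solvable. Task (b) is then a lift analysis for each Schur cover on the resulting list.

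\textbf{Main obstacle.} The principal technical difficulty is task (b): for each simple $S$ on the list, deciding precisely which involutions of $S$ lift to honest involutions (rather than to elements of order $4$ or higher) in each Schur cover $L$, and then re-checking solvability of $\Centralizer_L(i)$. The contrast between $\PSL_2(q)$ and $\SL_2(q)$ for odd $q$ in (vi) --- where $\SL_2(q)$ has only the central involution $-I$, whose centralizer is the whole non-solvable group and is thus excluded --- as well as the restriction to $n \ge 8$ for $2 \cdot \Alt_n$ and $n \in \{6,7\}$ for $3 \cdot \Alt_n$ in (ii), are typical manifestations of this subtlety. These lifts are controlled by the cycle-type formulas for the spin double covers of $\Alt_n$ (elements of cycle type $(2^k 1^{n-2k})$ lift to an involution iff $k$ satisfies an explicit parity condition) and, for the exceptional covers and sporadic groups, by direct consultation of the ATLAS.
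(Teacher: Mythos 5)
Your proposal is correct and follows essentially the same route as the paper: reduce to the simple quotient $S$ via Lemma \ref{lem: involution centralizers in factor groups}, classify the simple groups with a solvable involution centralizer by CFSG case analysis (the explicit centralizer structure for $\Alt_n$ forcing $n\le 12$, ATLAS for sporadics, the Aschbacher--Seitz and GLS tables for Lie type), and then determine which quasi-simple covers admit a non-central involution with the required property. The paper is equally reliant on table look-ups for the Lie-type and covering-group steps, so your acknowledged ``main obstacle'' is handled there in the same citation-based way.
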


\begin{proof} Let $\overline{L}=L/\Center(L)$. Since  $\Centralizer_L(i)$ is solvable,  $i\not\in\Center(L)$
and  $\Centralizer_{\overline{L}}(\overline{i})$ is solvable by Lemma \ref{lem: involution centralizers in factor groups}. Thus $S$ is a non-abelian simple group with a solvable involution centralizer.

(1) Assume that $S$ is a sporadic simple group. The information on the centralizers of involutions of $L$ can be read off from Tables $5.3(a)-(z)$ in \cite{GLS3}. It follows that $L\cong \rm{M}_{11}$ or $S\cong \rm{M}_{12}, \rm{M}_{22},\rm{Fi}_{22}$.

(2) Assume $S\cong\Alt_n$ with $n\ge 5.$  We know that every involution $j$ of $S\cong \Alt_n$ is a product of $r$ disjoint transpositions where $r$ is even. Let $s=n-2r$. By \cite[Proposition 5.2.8]{GLS3} $\Centralizer_S(j)\cong (H_1\times H_2)\langle t\rangle$, where $H_2\cong \Alt_s$ and $H_1\cong R_1L_1$ with $L_1\cong\Sym_r$ and $R_1\cong C_2^{r-1}$. Hence if $r$ or $s$ is at least $5$, then $\Centralizer_S(j)$ is non-solvable. Thus both $r$ and $s$ are at most $4$ and hence $n\leq 12$.  Therefore, if $L=\Alt_n$ with $n\ge 5$, then $5\leq n\leq 12$.  If $L\cong 2\cdot \Alt_n$, then $n\leq 12$ by our observation above. However, $2\cdot\Alt_n$ has a non-central involution only when $n\ge 8.$ Thus if $L\cong 2\cdot \Alt_n$, then $8\leq n\leq 12.$  For $n=6,7$, only $3\cdot \Alt_n$ has a solvable involution centralizer.

(3) Assume $S$ is a finite simple group of Lie type. The centralizers of involutions of $L$ are determined in \cite{AS} and Tables $4.5.1$ and $4.5.2$ in \cite{GLS3}. From these results, it is easy to get all the possibilities for $L$. (See also \cite[Lemma 3.4]{LO}). 
\end{proof}

We use the convention that  $\PSL_n^\epsilon(q)$ is $\PSL_n(q)$ if $\epsilon=+$  and $\PSU_n(q)$ if $\epsilon=-$. A similar convention applies to $\SL_n^\epsilon(q)$. We now prove the main result of this section.

\begin{table}[ht]
\caption{Some small simple groups}
\begin{center}
\begin{tabular}{l|ccccc}
\hline
$S$&$e_2(S)$&$z$&$\nu_2(|z^S|)$&$|\Out(S)|$&$\Mult(S)$  \\\hline

$\textrm{M}_{12}$&$2$&$3a$&$5$&$2$&$\bbZ_2$\\

$\textrm{M}_{22}$&$0$&$5a$&$7$&$2$&$\bbZ_{12}$\\

$\textrm{Fi}_{22}$&$1$&$3d$&$17$&$2$&$\bbZ_6$\\

$\textrm{A}_{8}$&$1$&$5a$&$6$&$2$&$\bbZ_2$\\

$\textrm{A}_{9}$&$1$&$3b$&$4$&$2$&$\bbZ_2$\\

$\textrm{A}_{10}$&$1$&$3c$&$7$&$2$&$\bbZ_2$\\

$\textrm{A}_{11}$&$1$&$3c$&$6$&$2$&$\bbZ_2$\\

$\textrm{A}_{12}$&$0$&$3d$&$7$&$2$&$\bbZ_2$\\

$\PSL_3(4)$&$0$&$3a$&$6$&$12$&$\bbZ_4\times \bbZ_{12}$\\

${}^2\textrm{B}_{2}(8)$&$0$&$5a$&$6$&$3$&$\bbZ_2\times\bbZ_2$\\

$\PSU_{4}(2)$&$1$&$5a$&$6$&$2$&$\bbZ_2$\\

$\PSU_{6}(2)$&$3$&$3c$&$12$&$6$&$\bbZ_2\times\bbZ_6$\\

$\textrm{Sp}_{6}(2)$&$2$&$3c$&$7$&$1$&$\bbZ_2$\\

${\Omega}_{8}^+(2)$&$2$&$3d$&$9$&$6$&$\bbZ_2\times\bbZ_2$\\

$\textrm{F}_{4}(2)$&$4$&$3c$&$18$&$2$&$\bbZ_2$\\

${}^2\textrm{E}_{6}(2)$&$5$&$3c$&$27$&$6$&$\bbZ_2\times\bbZ_6$\\

$\textrm{PSU}_4(3)$&$0$&$3d$&$7$&$8$&$\bbZ_3\times\bbZ_{12}$\\

$\Omega_7(3)$&$0$&$3g$&$8$&$2$&$\bbZ_6$\\

$\textrm{P}\Omega_{8}^+(3)$&$2$&$3m$&$11$&$24$&$\bbZ_2\times\bbZ_2$\\

\hline
\end{tabular}
\end{center}
\label{tab}
\end{table}%

\begin{thm}\label{th:quasisimple groups}
Let $L$ be a quasi-simple group with center $Z$ and let $S=L/Z$. Suppose that $Z$ is a $2$-group and the following conditions hold.

\begin{itemize}
\item[$(1)$] If $i$ is a non-central involution of $L$, then $\Centralizer_L(i)$ is $2$-closed.
\item[$(2)$] If $x,z\in \Real(L)\setminus \Center(L)$ are  non-central real elements, where $x$ is a $2$-element, then $|z^L|_2\leq |\Out(S)|_2\cdot |x^L|_2$.
\end{itemize}
Then $L\cong\SL_2(q)$ with $q\ge 5$ odd.
\end{thm}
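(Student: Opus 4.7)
The plan is to apply Lemma~\ref{lem:solvable involution centralizer} whenever $L$ has a non-central involution, to handle the ``all involutions central'' case via Brauer--Suzuki together with the Gorenstein--Walter classification, and in every surviving case to exhibit a pair of real elements violating hypothesis~(2) --- leaving $L \cong \SL_2(q)$ with $q \geq 5$ odd as the only possibility.

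First, suppose $L$ has a non-central involution $i$. Hypothesis~(1) makes $\Centralizer_L(i)$ $2$-closed and hence solvable, so Lemma~\ref{lem:solvable involution centralizer} confines $L$ to a finite list; since $Z(L)$ is a $2$-group, the covers $3\cdot\Alt_n$ are immediately discarded. For each remaining candidate I would select a non-central real $2$-element $x \in L$ of small class $2$-part and a non-central real element $z \in L$ of large class $2$-part, and contradict hypothesis~(2) by showing $|z^L|_2 > |\Out(S)|_2 \cdot |x^L|_2$. For any lift $w \in L$ of $\bar w \in S = L/Z(L)$ one has $|\bar w^S|_2 \mid |w^L|_2$, and when $x$ is an involution Lemma~\ref{lem: involution centralizers in factor groups} gives $|x^L|_2 \leq |Z(L)|_2 \cdot |\bar x^S|_2$; combining these with Table~\ref{tab}, which for each listed $S$ records an $e_2(S) = \nu_2(|\bar x^S|)$ and a $\nu_2(|\bar z^S|)$ satisfying $\nu_2(|\bar z^S|) > \nu_2(|\Out(S)|) + e_2(S) + \nu_2(|\Mult(S)|)$, yields the desired contradiction. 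For the infinite even-characteristic families $\PSL_2(q), \PSL_3(q), \PSU_3(q), \Sp_4(q), {}^2\rB_2(q)$ of Lemma~\ref{lem:solvable involution centralizer}(iii) I would use a uniform argument: a long root element plays the role of $x$ and has $|x^L|_2$ bounded by the order of the unipotent radical of a minimal parabolic, while a generic regular semisimple real element gives $|z^L|_2$ absorbing most of a Sylow $2$-subgroup of $L$, producing a gap far exceeding $|\Out(S)|_2$.

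Next I would handle the case in which every involution of $L$ lies in $Z(L)$. Then the Sylow $2$-subgroup of $L$ is cyclic or generalized quaternion; cyclic is impossible for quasi-simple $L$, and Brauer--Suzuki together with Gorenstein--Walter forces $S \cong \PSL_2(q)$ for some odd $q \geq 5$ or $S \cong \Alt_7$. The cover $L \cong 2\cdot\Alt_7$ is killed by hypothesis~(2): a lift of a $7$-cycle gives $|z^L|_2 = 8$, an order-$4$ element of the quaternion Sylow gives $|x^L|_2 = 1$, and $|\Out(\Alt_7)|_2 = 2$. The case $S \cong \PSL_2(q)$ with every involution of $L$ central yields exactly $L \cong \SL_2(q)$, as desired. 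For the companion case in Lemma~\ref{lem:solvable involution centralizer}(vi) with $L \cong \PSL_2(q)$ itself, the centralizer of a non-central involution is dihedral of order $q\pm 1$ and fails to be $2$-closed unless $(q\pm 1)/2$ is a power of $2$; the few small $q$ where this could hold (for example $q \in \{5,7,9\}$) are eliminated by a direct check of hypothesis~(2) --- for instance, in $\Alt_5 \cong \PSL_2(5)$ a $3$-cycle gives $|z^L|_2 = 4$ while the best involution gives $|x^L|_2 = 1$ and $|\Out|_2 = 2$.

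The main obstacle will be the case analysis for the Lie-type families in Lemma~\ref{lem:solvable involution centralizer}(iii)--(iv) that are not of $\SL_2$ type: Table~\ref{tab} handles only the bounded small cases, and the unbounded families demand uniform class-size estimates inside the quasi-simple cover $L$ that simultaneously control the Schur multiplier $\Mult(S)$ and the outer automorphism group $\Out(S)$.
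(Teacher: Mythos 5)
Your overall strategy---split on whether $L$ has a non-central involution, use Lemma~\ref{lem:solvable involution centralizer} in one branch and a classification of groups with all involutions central in the other, then violate hypothesis~(2)---matches the paper's in outline, but the execution has several genuine gaps. The most serious is your treatment of the branch where $L$ has a non-central involution. The paper splits this further: when $Z=1$, hypothesis~(1) says \emph{every} involution centralizer of the simple group $L$ is $2$-closed, so Suzuki's (C)-group theorem \cite{Suzuki} applies and yields a short list ($\Alt_6$, $\PSL_2(p)$ with $p$ Fermat or Mersenne, $\PSL_2(2^f)$, $\PSL_3^{\pm}(2^f)$, ${}^2\mathrm{B}_2(q)$); moreover, taking $x$ to be a $2$-central involution gives $|x^L|_2=1$, so condition~(2) collapses to $|z^L|_2\le|\Out(L)|_2$, which is violated by any odd-order real element with odd centralizer. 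Lemma~\ref{lem:solvable involution centralizer} is reserved for the case $Z\neq 1$, where the requirement that $\Mult(S)$ have even order cuts its list down to the finitely many groups in Table~\ref{tab}. By funnelling the entire non-central-involution case through Lemma~\ref{lem:solvable involution centralizer} (which only uses solvability of \emph{one} involution centralizer), you are left with many candidates not in Table~\ref{tab} --- $\Alt_5$--$\Alt_7$, $M_{11}$, $\PSL_n(2)$, $\PSU_n(2)$ for various $n$, $\Sp_{8}(2)$, $\Sp_{10}(2)$, ${}^3\mathrm{D}_4(2)$, ${}^2\mathrm{F}_4(2)'$, $\mathrm{G}_2(3)$, the infinite even-characteristic families, and $\PSL_2(q)$ for all odd $q$ --- and your ``uniform argument'' via long root elements and regular semisimple elements is only a sketch; it misses the decisive simplification $|x^L|_2=1$ that makes the paper's Case~2 trivial. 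Your remark that only ``a few small $q$'' survive for $\PSL_2(q)$ also understates the problem: the Fermat and Mersenne primes form a potentially infinite family and must be handled uniformly, as the paper does.

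Two further points. In the all-involutions-central branch, your claim that the Sylow $2$-subgroup must be cyclic or generalized quaternion presupposes a unique involution, which is not automatic when $Z$ is merely assumed to be a $2$-group (it could a priori be non-cyclic); the correct citation is Griess \cite{Griess}, which directly yields $L\cong\SL_2(q)$ or $2\cdot\Alt_7$. Finally, your elimination of $2\cdot\Alt_7$ uses a lift of a $7$-cycle as the real element $z$, but $7$-elements are \emph{not} real in $\Alt_7$ (the two degree-$10$ characters take the irrational value $b7$ there), and your claim $|x^L|_2=1$ for an order-$4$ element is false since the center of the generalized quaternion Sylow $2$-subgroup of order $16$ has order $2$. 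The paper instead uses a real element of order $3$ with class size $280$ and a real element of order $4$ with class size $210$, giving $|z^L|_2=8>4=|\Out(S)|_2\cdot|x^L|_2$.
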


\begin{proof} We consider the following cases.

\textbf{Case 1}. All involutions of $L$ are central. By the main theorem in \cite{Griess}, $L\cong\SL_2(q)$ with $q\ge 5$ odd or $2\cdot \Alt_7$. If the first case holds, then we are done. So, assume that $L\cong 2\cdot\Alt_7$. We have $|\Out(S)|_2=2$. Using \cite{Atlas}, we can check that $L$ has two real elements $z$ and $x$ of order $3$ and $4$ respectively, with $|z^L|=280$ and $|x^L|=210$. Clearly $|z^L|_2=2^3>|\Out(S)|_2\cdot|x^L|_2=2^2$, violating condition $(2).$ So this case cannot occur.

\medskip
\textbf{Case 2}. $\Center(L)$ is trivial. Then $L$ is a non-abelian simple group.  Let $x\in L$ be a $2$-central involution of $L$, i.e., $x$ is an involution that lies in the center of some Sylow $2$-subgroup of $L$. We have $|x^L|_2=1$ so $(2)$ implies $|z^L|_2\leq |\Out(L)|_2$ for all non-central real elements $z\in L$. 

The centralizer of every non-central involution of $L$ is $2$-closed by the hypothesis. Since $L$ is simple, it follows that the centralizer of every involution of $L$ is $2$-closed. Now  \cite[Theorem 1]{Suzuki} yields that $L$ is isomorphic to one of the following groups: $\Alt_6$, $\PSL_2(p)$, $p$ a Fermat or a Mersenne prime, $\PSL_2(2^f)$ with $f\ge 2$, $\PSL_3(q),\PSU_3(q)$ with $q=2^f$ or  ${}^2{\rm{B}}_2(2^{2f+1})$ with $f\ge 1.$

Assume that $L\cong {}^2{\rm{B}}_2(2^{2f+1})$ with $f\ge 1.$ It follows from Propositions 3 and 16 in \cite{Suzuki-1} that $L$ has a real element $z$ of order $2^f-1$ with $|\Centralizer_L(z)|=2^f-1$ and so $|z^L|_2=2^{2(2f+1)}\ge 64$. Since $|\Out(L)|=2f+1$ is odd, $|z^L|_2>|\Out(L)|_2$, hence this case cannot occur. 

Assume $L\cong\PSL_2(p)$ with $p$ a Fermat or a Mersenne prime. We have $|\Out(L)|=2$ and $L$ possesses a real element $z$ of odd order $(p+\delta)/2$, where $p\equiv \delta$ (mod $4$) and $|z^L|_2=|L|_2\ge 4$. Since $|z^L|_2>2=|\Out(L)|_2$, this case cannot happen.

 If $L\cong\Alt_6$, then $|\Out(L)|=2^2$. However, $\Alt_6$ has a real element $z$ of order $3$ with $|z^L|=40$ and thus $|z^L|_2=8>|\Out(S)|_2=4.$ Thus this case cannot happen. 
 
 Next, if $L\cong\PSL_2(2^f)$ with $f\ge 2,$ then $L$ has a real element $z$ of order $2^f-1$ with $|z^L|=2^f(2^f+1).$ Clearly $|z^L|_2=2^f>f\ge|\Out(L)|_2$ as $|\Out(L)|=f$.

Finally, assume $L\cong\PSL_3^\epsilon(2^f)$. We have $f\ge 2$ as $\PSL_3(2)\cong\PSL_2(7)$, where $7=2^3-1$ is a Mersenne prime and $\PSU_3(2)$ is not simple. In both cases, $|\Out(L)|=2df$ with $d=(3,2^f-\epsilon1)$ so $|\Out(L)|_2=2^{1+\nu_2(f)}$. The quasi-simple group $X=\SL_3^\epsilon(2^f)$ possesses real elements $h$ of order $2^f+\epsilon1$ with $|\Centralizer_X(h)|=4^f-1$ and $g$ of order $2^f-\epsilon1$  with $|\Centralizer_X(g)|=(2^f-\epsilon1)^2$ (see \cite[Lemma 4.4 (3)]{GNT}). Now let $y\in\{g,h\}$ be an element with $o(y)$ relatively prime to  $d=(3,2^f-\epsilon1)$ and let $z$ be the image of $y$ in $L=\PSL_3^\epsilon(2^f)\cong X/\Center(X)$. Then $|\Centralizer_L(z)|=|\Centralizer_X(y)/\Center(X)|$ (by \cite[Lemma 7.7]{Isaacs-1}) is odd, so $|z^L|_2=|L|_2=2^{3f}$.   Since $f\ge 2,$  $2^{3f}>2^{2f}\ge 2^{1+\nu_2(f)}$. Hence these cases cannot occur.

\medskip
\textbf{Case 3}. $\Center(L)$ is nontrivial and $L$ has a non-central involution. Let $j$ be a non-central involution of $L$. By condition (1), $\Centralizer_L(j)$ is $2$-closed and thus it is solvable. Hence $L$ is one of the quasi-simple groups in Lemma \ref{lem:solvable involution centralizer}. Moreover, as $\Center(L)$ is a nontrivial $2$-group, the Schur multiplier $\Mult(S)$ of $S\cong L/\Center(L)$ is of even order. It follows that we only need to consider the following cases. 
\begin{itemize}
\item[$(i)$] $S\cong  \rm{M}_{12}, \rm{M}_{22},\rm{Fi}_{22}$, $\Alt_n$ $(8\leq n\le 12)$, $\PSL_3(4),{}^2{\rm{B}}_2(8)$.

\item[$(ii)$] $S\cong \PSU_n(2)$ $(n=4,6)$, $\Sp_{6}(2)$, $\Omega_{8}^+(2),\rm{F}_4(2),{}^2\rm{E}_6(2)$.                             
\item[$(iii)$] $S\cong\PSU_4(3),\PSp_4(3),\Omega_7(3),\rm{P}\Omega_8^+(3)$.
\end{itemize}

We will show that these cases cannot occur by showing that condition (2) does not hold. Clearly $\overline{j}\in \overline{L}$ is a non-central involution and  by Lemma \ref{lem: involution centralizers in factor groups}, $|j^L|_2\leq 2^{r_2(\Center(L))}\cdot |\overline{j}^{\overline{L}}|_2$, where $r_2(\Center(L))$ is the $2$-rank of $\Center(L)$. Let $\tilde{S}$ be the perfect central extension of $S$ such that $\tilde{S}/\Center(\tilde{S})\cong S$ and $|\Center(\tilde{S})|=|\Mult(S)|$. From \cite{Atlas}, we see that $\Mult(S)$ can be written as a direct product of at most two (possibly trivial) cyclic groups, so $|r_2(\Center(L))|\leq r_2(\Mult(S))\leq 2$. Let ${e_2(S)}=\textrm{max}\{\nu_2(|x^S|):\text{ $x$ is an involution in $S$}\}$. Then  for each non-central involution $j\in L$, we have $\nu_2(|j^L|)\leq {r_2(\Mult(S))}+e_2(S).$

Let $z\in S$ be a nontrivial real element of odd order. There exists $y\in \Real(L)\setminus\Center(L)$ of odd order such that $z$ is the image of $y$ in $L/\Center(L)\cong S$ (see \cite[Lemma 3.2]{NST} or \cite[Lemma 2.2]{GNT}) and by applying \cite[Lemma 7.7]{Isaacs-1}, $|z^S|=|y^L|$ (noting $(o(y),|\Center(L)|)=1$). Hence to show that condition (2) does not hold, it suffices to find a nontrivial real element $z\in S$ of odd order such that the following inequality holds.

\begin{equation}\label{eqn1}\nu_2(|z^S|)>\nu_2(|\Out(S)|)+r_2(\Mult(S))+e_2(S).\end{equation}
 For each simple group $S$ in $(i)-(iii)$ above,  we list in Table \ref{tab} the invariant $e_2(S)$, the largest $2$-part of the sizes of conjugacy classes of involutions of $S$ in the second column, the Atlas class name and the $2$-part of the conjugacy class of odd order real element $z\in S$, the order of the outer automorphism group $\Out(S)$ and in the last column the Schur multiplier $\Mult(S)$. The information in this table can be read off from the character table of $S$ using \cite{Atlas}.

From Table \ref{tab}, we can check that Equation \eqref{eqn1} holds  which completes our proof.
\end{proof}

\section*{Acknowledgment}  The author is grateful to Kay Magaard, Ben Brewster and Dan Rossi for their help during the preparation of this paper. He also thanks the anonymous referee for his or her comments, which helped to improve the clarity of this paper.

\end{document}